\newif\ifuseprecompiled
\pgfplotsset{compat=newest}
\DeclareMathOperator{\diag}{diag}
\DeclareMathOperator{\rank}{rank}
\DeclareMathOperator{\sv}{sv}
\pgfplotsset{compat=newest}
\definecolor{RWTHteal}{RGB}{0,152,161}
\definecolor{RWTHmagenta}{RGB}{138,32,60} 
\definecolor{RWTHsuperlightblue}{RGB}{199,221,242}
\definecolor{RWTHgrey}{RGB}{100,101,103}
\definecolor{RWTHblack}{RGB}{0,0,0}
\definecolor{RWTHbordeaux}{RGB}{161,16,53}
\definecolor{RWTHorange}{RGB}{246,168,0}
\definecolor{RWTHblue}{RGB}{0,83,159}
\definecolor{RWTHwhite}{RGB}{255,255,255}
\newcommand{\greym}[1]{\color{RWTHgrey}{#1}\color{RWTHblack}}
\newlength\figureheight
\newlength\figurewidth
\newcommand{\lhb}[1]{\ensuremath{\mathfrak{L}(#1)}}
\newcommand{\rhb}[1]{\ensuremath{\mathfrak{R}(#1)}}
\newcommand{\R}{\ensuremath{\mathbb{R}}}
\newcommand{\K}{\ensuremath{\mathbb{K}}}
\newcommand{\C}{\ensuremath{\mathbb{C}}}
\newcommand{\N}{\ensuremath{\mathbb{N}}}
\newcommand{\D}{\ensuremath{\mathcal{D}}}
\newcommand{\bigslant}[2]{{\raisebox{.2em}{$#1$}\left/\raisebox{-.2em}{$#2$}\right.}} 
\newcommand{\HONEY}{\ensuremath{\mathrm{HONEY}}}
\newcommand{\BDRY}{\ensuremath{\mathrm{BDRY}}}
\newcommand{\HIVE}{\ensuremath{\mathrm{HIVE}}}
\newcommand{\EDGE}{\ensuremath{\mathrm{edge}}}
\newcommand{\ck}{\boxtimes}
\DeclareMathOperator{\degree}{deg}
\patchcmd{\@addmarginpar}{\ifodd\c@page}{\ifodd\c@page\@tempcnta\m@ne}{}{}
\def\BC#1#2{}
\def\new#1{#1}
\def\old#1{}
\def\repl#1#2{}
\def\vec{\mathrm{vec}}
\setlist[enumerate]{itemsep=0pt,topsep=0pt,parsep=1pt} 
\setlist[itemize]{itemsep=0pt,topsep=0pt,parsep=1pt} 
\titleformat*{\section}{\large \bfseries}
\titleformat*{\subsection}{\normalsize \bfseries}
\titleformat*{\subsubsection}{\normalsize \bfseries}
\numberwithin{equation}{section} 
\declaretheoremstyle[
    headfont=\bfseries,
    notefont=\normalfont,
    bodyfont=\itshape,
    postheadhook={},
]{mytheorem}
\declaretheorem[style=mytheorem,numberwithin=section]{theorem}
\declaretheorem[style=mytheorem,numberlike=theorem]{corollary}
\declaretheorem[style=mytheorem,numberlike=theorem]{lemma}
\declaretheorem[style=mytheorem,numberlike=theorem]{proposition}
\declaretheorem[style=mytheorem,numberlike=theorem]{definition}
\declaretheorem[style=mytheorem,numberlike=theorem]{remark}
\newdimen\dummy
\author{Sebastian Kr\"amer\thanks{Institut f\"ur Geometrie und Praktische Mathematik,  RWTH Aachen University,  Templergraben 55,
52056 Aachen, Germany
  ({\tt kraemer@igpm.rwth-aachen.de}, {\tt http://www.igpm.rwth-aachen.de}).}
}
\title{A Geometric Description of Feasible Singular Values in the Tensor Train Format} 
\date{}
\begin{document}

\maketitle

\begin{abstract}
Tree tensor networks\BC{b1_}{b_} such as the tensor train format are a common tool for high dimensional problems.
The associated multivariate rank and accordant tuples of singular values are based on different matricizations of the same tensor.
While the behavior of such is as essential as in the matrix case, 
here the question about the \textit{feasibility} of specific constellations
arises: which prescribed tuples can be realized as singular values of a tensor and
what is this feasible set? \\
We first show the equivalence of the \textit{tensor feasibility problem (TFP)} to the \textit{quantum marginal problem (QMP)}.
In higher dimensions, in case of the tensor train (TT-)format, the conditions for feasibility can be decoupled.
By present results for three dimensions for the QMP, it then follows that the tuples of squared, feasible TT-singular values form polyhedral cones.
We further establish a connection to eigenvalue relations of sums of Hermitian matrices, 
which in turn are described by sets of interlinked, so called \textit{honeycombs}, as they have been introduced by Knutson and Tao.\\
Besides a large class of universal, necessary inequalities as well as the vertex description for a special, simpler instance, we present
a linear programming algorithm to check feasibility and a simple, heuristic algorithm to construct representations of tensors with prescribed, feasible TT-singular values
in parallel.\\

\smallskip 
\noindent \textbf{Keywords.} tensor, TT-format, singular value, honeycomb, eigenvalue, Hermitian matrix, linear inequality, quantum marginal problem \\

\smallskip 
\noindent \textbf{AMS subject classifications.} 15A18,  
  15A69,  
  52B12,  
  81P45 \\  
\end{abstract} 
\section{Introduction}\label{sec:intro}
For $\K \in \{\R,\C\}$\BC{q1}{q}, let $A\in\K^{n_1 \times \ldots \times n_d}$ be a $d$th-order tensor, such as in \cref{fig:tensor4d}.
\begin{figure}[tbhp]
  \centering
  \ifuseprecompiled
  \includegraphics{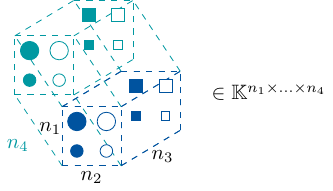}
  \else
  \setlength\figureheight{3cm}
  \setlength\figurewidth{12cm}
  \tikzsetnextfilename{tensor4d}
  \input{tikz_base_files/reshaping_neu/tensor4d.tikz}
  \fi
  \caption{Visualization of a $4-$dimensional tensor $A$ with mode sizes $n_1 = \ldots = n_4 = 2$.}
  \label{fig:tensor4d} 
\end{figure}%
The tensor $A$ allows to be reshaped into certain matricizations
\begin{align*}
  A^{(\{1,\ldots,\mu\})} \in \K^{n_1\cdot\ldots \cdot n_\mu\ \times \ n_{\mu+1}\cdot \ldots \cdot n_d}, \quad \mu = 1,\ldots,d-1,
\end{align*}
which are related to the so called tensor train (TT-)decomposition \cite{Gr10_Hie,Os11_Ten}.
The vectorization \new{$\vec(\cdot)$}\footnote{\new{in Matlab syntax, $\vec(A) = A(:)$}} in co-lexicographic order (column-wise) is to be an invariant to these reshapings\BC{a1}{a}, \new{i.e.
\begin{align*}
  \vec(A^{(\{1,\ldots,\mu\})}) = \vec(A) \in \K^{n_1\cdot\ldots\cdot n_d \times 1}, \quad \mu = 1,\ldots,d-1,
\end{align*}}
such that they become uniquely defined.\\
We may also explicitly write 
  $A^{(\{1,\ldots,\mu\})}((i_1,\ldots,i_\mu)_1,\, (i_{\mu+1},\ldots,i_d)_{\mu+1}) = A(i)$ 
where $(i_1,\ldots,i_\mu)_\nu := 1+\sum_{s = 1}^\mu \left( \prod_{h = 1}^{s-1} n_{\nu+h-1} \right) (i_s - 1) \in \{1,\ldots,n_\nu \ldots n_{\nu+\mu-1}\} \subset \mathbb{N}$ (we will
skip the index $\nu$ when context renders it obsolete).\\
The $d-1$ tuples of TT-singular values $\sigma = (\sigma^{(1)},\ldots,\sigma^{(d-1)}) = \sv_{\mathrm{TT}}(A)$
and \new{the }according TT-rank\new{(}s\new{)} \old{$r_1,\ldots,r_{d-1} \in \mathbb{N}$ }\BC{d1}{d}\new{$r = (r_1,\ldots,r_{d-1}) \in \N^{d-1}$ } of $A \neq 0$
are given through the matrix singular values $(\sv)$ of its reshapings
\begin{align*}
  \sigma^{(\mu)} := \sv(A^{(\{1,\ldots,\mu\})}), \ r_\mu := \rank(A^{(\{1,\ldots,\mu\})}), \quad \mu = 1,\ldots,d-1,
\end{align*}
such as displayed in \cref{fig:reshapingfig}. 
%
%
%
\begin{figure}[tbhp]
  \centering
  \ifuseprecompiled
  \includegraphics{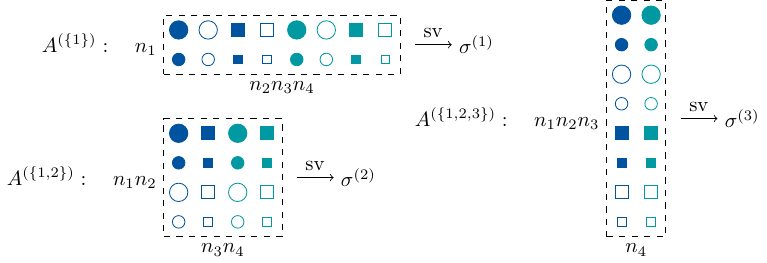}
  \else
  \setlength\figureheight{3cm}
  \setlength\figurewidth{12cm}
  \tikzsetnextfilename{reshaping}
  \input{tikz_base_files/reshaping_neu/reshaping.tikz}
  \fi
  \caption{\label{fig:reshapingfig} Matricizations (or reshapings, or unfoldings) of the $4$th-order tensor $A$ into the matrices $A^{(\{1\})} \in \K^{n_1 \times n_2 n_3 n_4}$, $A^{(\{1,2\})} \in \K^{n_1n_2 \times n_3 n_4}$ and 
    $A^{(\{1,2,3\})} \in \K^{n_1 n_2 n_3 \times n_4}$ through which $\sigma = \sv_\mathrm{TT}(A)$ is obtained.}
\end{figure}%
\noindent
%
%
%
For simplicity, each of these singular values is considered to be a weakly decreasing, infinite sequence with finitely many nonzero entries --- hence element of the \old{space }\new{cone }which we denote with 
$\mathcal{D}^{\infty}_{\geq 0}$ (cf. \cref{def:weakdectuple}).
Since each $\sigma^{(\mu)} = (\sigma^{(\mu)}_1,\sigma^{(\mu)}_2,\ldots)$ is based on the same entries within $A$, the question about the \textit{feasibility}
of prescribed  TT-singular value arises immediately:

\begin{definition}[TT-feasibility]\label{def:deffeasibility}
  Let $\sigma=(\sigma^{(1)},\ldots,\sigma^{(d-1)}) \in (\mathcal{D}^{\infty}_{\geq 0})^{d-1}$. Then $\sigma$ is called {\normalfont feasible
    for $n = (n_1,\ldots,n_d)$} if there exists a tensor $A \in \K^{n_1 \times \ldots \times n_d}$ giving rise to it in form of its TT-singular values,
  i.e. $\sigma = \sv_\mathrm{TT}(A)$.
\end{definition}
In other words, we ask which $\sigma$ are in the range of $\sv_\mathrm{TT}$. One necessary condition, 
\begin{align} \|A\|_F = \|\sigma^{(\mu)}\|_2 = \|\sigma^{(\nu)}\|_2, \quad \mu,\nu = 1,\ldots,d-1, \label{eq:traceprop} \end{align}
where $\|\cdot\|_F$ is the Frobenius norm, follows directly and is denoted as \textit{trace property}. 
\\\\
Understanding the nature of this and related problems regarding low rank decompositions is essential, given
that many methods rely on at least basic, if not rigorous assumptions about these generalized singular values, 
just as in the matrix case. At the same time, they are a key tool to the (complexity) analysis of higher order data,
whether for example in signal processing, machine learning or quantum chemistry,
as presented in the extensive survey articles \cite{CiMaLaZhZhCaPh15_Ten,GrKrTo13_Ali,KoBa09_Ten,SiLaFuHuPaFa17_Ten}.\BC{c1}{c}\\\\
\new{As we will see in \cref{sec:weylhorn}, $\sigma$ is feasible for $n$ in the case $\K = \C$ if and only if it is already feasible for $\K = \R$.
The choice of $\K$ does hence not influence the range of $\mathrm{sv}_{TT}$.\BC{z7}{z}}

\subsection{\new{Equivalence of the tensor feasibility and the quantum marginal problem}}\label{sec:QMPeqTFP}\BC{x1}{x}
Let $I = \{1,\ldots,d-1\}$ and $\mathcal{K} \subset \{ J \mid \emptyset \neq J \subset I\}$ be a family of subsets.
The quantum marginal problem (see for example \cite{DaHa05_Qua,Kl06_Qua,Sc14_Qua}) and the tensor feasibility problem, as defined below, are equivalent in the sense of \cref{def:eqTFPandQMP}.\\\\
The mapping $\mathrm{trace}_{I\setminus J}$, called \textit{partial trace},
is induced via $\mathrm{trace}_{I\setminus J}(A_1 \otimes \ldots \otimes A_{d-1}) = \prod_{i \notin J} \mathrm{trace}(A_i) \cdot \bigotimes_{i \in J} A_i \in \K^{n_J \times n_J}$
for matrices $A_i \in \K^{n_i \times n_i}$ and $n_J := \prod_{i \in J} n_i$ for $J \subset I$.
\begin{definition}[Quantum marginal problem (QMP)]
  For each $J \in \mathcal{K}$, $J \subset I$, let $\lambda^{(J)} \in \mathcal{D}^{\infty}_{\geq 0}$ (potential eigenvalues).
 Then the collection $\{\lambda^{(J)}\}_{J \in \mathcal{K}}$ is called {\normalfont compatible (for $n$)} if there exists a hermitian, positive semi-definite matrix
 $\rho_I \in \C^{n_I \times n_I}$ such that
 \begin{align}
  \mathrm{eig}(\rho_J) = \lambda^{(J)}, \quad \rho_J = \mathrm{trace}_{I \setminus J}(\rho_I) \in \C^{n_J \times n_J}, \label{defcomp}
 \end{align}
 for all $J \in \mathcal{K}$. 
\end{definition}
\begin{definition}[Tensor feasibility problem (TFP) for \texorpdfstring{$\K = \C$}{K=C}]
 For each $J \in \mathcal{K}$, $J \subset I$, let $\sigma^{(J)} \in \mathcal{D}^{\infty}_{\geq 0}$ (potential singular values). 
 Then the collection $\{\sigma^{(J)}\}_{J \in \mathcal{K}}$ is called {\normalfont feasible (for $n$)} if there exists a tensor $A \in \C^{n_1 \times \ldots \times n_d}$
 such that 
 \begin{align}
  \mathrm{sv}(A^{(J)}) = \sigma^{(J)}, \quad A^{(J)} \in \C^{n_J \times n_{I \setminus J}}, \label{deffeas}
 \end{align}
for all $J \in \mathcal{K}$. The matrices $A^{(J)}$ are analogous reshapings and formally defined in, for example, \cite{Gr10_Hie}.
\end{definition}
Sets for which $d \in J$ need not be included in the definition of feasibility, since simply $\mathrm{sv}(A^{(J)}) = \mathrm{sv}(A^{(\{1,\ldots,d\} \setminus J)})$,
$\{1,\ldots,d\} \setminus J \subset I$. Note that in the introduction, and all subsequent sections, we use the short notation $\sigma^{(\mu)} = \sigma^{(\{1,\ldots,\mu\})}$ for the TT-format.
\begin{theorem}[Equivalence of TFP and QMP]\label{def:eqTFPandQMP}
The \textit{feasibility} of $\{\sigma^{(J)}\}_{J \in \mathcal{K}}$ is equivalent to the 
\textit{compatibility} of the entry-wise squared values $\{(\sigma^{(J)})^2\}_{J \in \mathcal{K}}$, where $n_d$ may be chosen as large as necessary
(although at most $n_d = \mathrm{rank}(\rho_I)$ is required).
\end{theorem}
\begin{proof} (of \cref{def:eqTFPandQMP}) \label{proofofTFPQMP}
Equivalence is achieved by setting
\begin{align}\label{eq:relation}
 A^{(I)} {A^{(I)}}^H = \rho_I,
\end{align}
 where $\cdot^H$ is the conjugate (also called Hermitian) transpose. The rest follows by
the simple fact that $\mathrm{trace}_{I \setminus J}(A^{(I)} {A^{(I)}}^H) = A^{(J)} {A^{(J)}}^H$ and hence
\begin{align}\label{eq:chaineq}
\lambda^{(J)} = \mathrm{eig}(\rho_J) = \mathrm{eig}(\mathrm{trace}_{I \setminus J}(A^{(I)} {A^{(I)}}^H)) = \mathrm{sv}(A^{(J)})^2 = (\sigma^{(J)})^2 
\end{align}
for all $J \subset \{1,\ldots,d\}$: First, let $\{\sigma^{(J)}\}_{J \in \mathcal{K}}$ be feasible for $n \in \N^d$ by means of the tensor $A$ as in \eqref{deffeas}. 
Then by \cref{eq:relation,eq:chaineq} the family $\{(\sigma^{(J)})^2\}_{J \in \mathcal{K}}$ is compatible. Conversely, assume the family is compatible by means of $\rho_I$ as in \cref{defcomp}.
 Then we define the tensor $A \in \C^{n_1 \times \ldots \times n_{d-1} \times \mathrm{rank}(\rho_I)}$
 via the Cholesky decomposition of $\rho_I$ as in \cref{eq:relation}. Hence, via \cref{eq:chaineq}, the family $\{\sigma^{(J)}\}_{J \in \mathcal{K}}$ is feasible
 for any $n_d \geq \mathrm{rank}(\rho_I)$.
\end{proof}
The \textit{pure} QMP adds the condition $\mathrm{rank}(\rho_I) = 1$. To obtain the equivalent TFP,
one sets $n_d = 1$. For two dimensions, the problem is reduced to the ordinary matrix singular values by which
$\sigma^{(\{1\})} = \sigma^{(\{2\})}$. 
For three dimensions, the relation between the pure QMP and the TFP for $\mathcal{K} = \{\{1\},\{2\},\{3\}\}$ is commonly mentioned, e.g. in \cite{Kl06_Qua}.
More general, the pure QMP for $\mathcal{K} = \{\{1\},\{2\},\ldots\}$ which is concerned with the spectra of $\rho_{\{1\}},\rho_{\{2\}},\ldots$ (often denoted
as density matrices $\rho_A,\rho_B,\ldots$) is 
the same as the Tucker-feasibility problem (cf. \cite{DoStLa17_Ont}).
Since the tensor space is effectively reduced by one dimension, one may substitute $d \leftarrow d-1$ and use $\widetilde{\mathcal{K}} = \{ \{1\},\ldots,\{d-2\},\{1,\ldots,d-1\} \}$,
which reveals that the pure QMP for $\mathcal{K}$ is equivalent to the QMP for $\widetilde{\mathcal{K}}$. For dimension $3$,
this equivalence is stated in \cite{Kl06_Qua} (using the notation $\rho_A,\rho_B,\rho_{AB}$ and $\rho_A,\rho_B,\rho_C$).
\\\\
The TT-feasibility problem in turn is identified with the quantum marginal problem for $\mathcal{K} = \{\{1\},\{1,2\},\ldots,\{1,\ldots,d-1\}\}$,
that is, the problem which is concerned with the spectra of $\rho_{\{1\}},\rho_{\{1,2\}},\ldots,\rho_{\{1,\ldots,d-1\}}$ (often denoted as density matrices $\rho_A,\rho_{AB},\ldots$).
The feasibility problem may demand an additional constraint $n_d < \infty$ which however only restricts $\degree(\sigma^{(I)}) = \mathrm{rank}(\rho_I) \leq n_d$.

\subsection{\new{The quantum marginal problem}}\label{sec:QMC}
\new{Earlier articles have answered several special instances of the QMP, which suggest that sets of compatible values
form convex, closed cones:
\\\\
\textit{Pure QMP for $\mathcal{K} = \{\{1\},\ldots,\{d-1\}\}$ (Tucker-feasibility)}:
For $n_i = 2$, $i = 1,\dots,d$, the physical interpretation of the pure QMP is related to an array of qubits. 
For every $d \in \N$, it is governed by the simple inequalities 
\begin{align}
 \lambda_2^{(\{i\})} \leq \sum_{j (\neq i)} \lambda_2^{(\{j\})}, \quad i \in I,
\end{align}
as proven by \cite{HiSuSz03_One} (cf. \cref{sec:feashist}). All constraints for the pure QMP with $n_i = 3$, $i = 1,\ldots,d$ for $d-1 = 3$, have been derived in \cite{Fr02_Mom,Hi03_Ont}.
Subsequently, \cite{Kl06_Qua} has presented a general solution to the pure QMP for $\mathcal{K} = \{ \{1\},\{2\},\{3\} \}$ for arbitrary $n$,
based on geometric invariant theory, and states that the cases $d - 1 > 3$ are straightforward.\\\\
\textit{QMP for $\mathcal{K} = \{\{1\},\{1,2\}\}$ (TT-feasibility for $d-1 = 2$)}: 
Similarly, \cite{DaHa05_Qua} has provided an elaborate answer to this QMP in form of a relation between cohomologies of Grassmannians. 
For each specific $n_1$ and $n_2$, a finite set of linear inequalities can thereby be derived which are equivalent (cf. \cite{BeSj00_Coa}) to compatibility.
Although the two latter solutions are in a certain sense complete (from an algebraic perspective), \cite{DaHa05_Qua} could for example only conjecture that in the
special case $n_1 \leq n_2$, compatibility of $(\lambda^{\{1\}}, \lambda^{\{1,2\}})$ is equivalent to just 
\begin{align}\label{eq:n2leqn1}
 \sum_{i=1}^k \lambda^{(\{1\})}_i \leq \sum_{i=1}^{n_2 k} \lambda^{(\{1,2\})}_i, \quad k = 1,\ldots,n_1,
\end{align}
where equality must hold for $k = n_1$ (which relates to the trace property for feasibility).
This instance was later confirmed by \cite{LiPoWa14_Ran} (in again different notation).\\\\
\textit{QMP for hierarchically structured $\mathcal{K}$}:
Interestingly, other classes of families $\mathcal{K}$ pose open problems, but may be approached through tensor format theory, such as for the TT-format. If the sets in $\mathcal{K}$
fulfill the hierarchy condition (\cite{Gr10_Hie})
\begin{align}
  J \cap \widetilde{J} \in \{ \emptyset,\ J,\ \widetilde{J} \}, \quad \mbox{for all $J,\widetilde{J} \in \mathcal{K}$},
\end{align}
then the equivalent feasibility problem can be decoupled into three-dimensional sub-problems using a hierarchical standard representation (for both $\K = \C$ and $\K = \R$) analogous to the one
we define in \cref{prop:stre}. We will however restrict ourselves to the TT-format here, since the general tensor tree network case is
beyond the scope of this paper. Also, the TT-format poses a certain special case as it corresponds to a tree graph in which each node is connected with at most two other ones.
}

\subsection{Overview \old{over}\new{of }results in this work}\label{sec:results}\BC{d1_}{d_}
\new{We show that if $\sigma$ and $\tau$ are feasible for $n \in \N^d$ (in the sense of \cref{def:deffeasibility}), then $\upsilon := \sqrt{\sigma^2+\tau^2}$, evaluated entry-wise, is feasible for $n$ as well (\cref{cor:Pyt}). 
This means that the set of squared feasible TT-singular values forms a
convex cone, which is closed and finitely generated, as it is to be expected\BC{d2_}{d_} from earlier QMP results on other families of matricizations. \BC{z0}{z}
This result is based on a decoupling (\cref{prop:stre}), by which we prove that
the single conditions for neighboring pairs $(\gamma,\theta) = (\sigma^{(\mu-1)},\sigma^{(\mu)})$ of singular values
already provide all conditions for the higher dimensional case (\cref{cor:pairfeas}). Further, these conditions are independent of $\K \in \{\R,\C\}$ (cf. \cref{thm:fultonsym}).}
\old{
The most characteristic result \repl{(Corollary 2.5)}{(\cref{cor:Pyt})} which we show, and quite the contrary a nontrivial result, implies
the following: If $\sigma$ and $\tau$ are feasible for $n \in \R^d$, then $\upsilon := \sqrt{\sigma^2+\tau^2}$ (entry-wise
evaluation) is feasible for $n$ as well. 
This means that the set of squared feasible TT-singular values forms a
convex cone, and as it turns out it is closed and finitely generated. 
This result is based on a decoupling \repl{(Definition 2.6 and Theorem 2.7)}{(\cref{thm:fundthm})}, by which it suffices to regard neighboring pairs $(\gamma,\theta) = (\sigma^{(\mu)},\sigma^{(\mu+1)})$ of singular values.}\\
\new{
Our slightly different perspective on feasibility of pairs (that is $d=3$) leads to the investigation of sets of interconnected (\cref{{def:hive}}), so called \textit{honeycombs} \cite{KnTa01_Hon}.
Apart from a pleasant graphical depiction (e.g. \cref{fig:plot_triv_feasible_single}), these constructs are at the same time a universal linear programming tool (\cref{alg:linprog})
which can decide the feasibility of each single pair $(\gamma,\theta)$ with low order polynomial computational complexity.
Hence, we can thereby also decide the feasibility of TT-singular values $\sigma$.
We further provide classes of necessary, linear inequalities (\cref{prop:setofineq}) for arbitrary $n \in \N$
and revisit the above mentioned special case \cref{eq:n2leqn1}, providing a complete vertex description as well (\cref{completevert})\BC{ka1}{ka}.
}%
\old{%
We further provide significant, necessary linear inequalities \repl{(Proposition 4.7, Theorem 5.1)}{(\cref{prop:setofineq})}, possibly describing facets.
If for each $\mu = 1,\ldots,d-1$ it holds $r_{\mu-1},r_{\mu} \leq n_{\mu}$, then feasibility is equivalent
to the validity only of the trace property \repl{(1.1) (cf. Theorem 2.10)}{\cref{eq:traceprop} (cf. \cref{thm:prop1})}. This is a generalization 
of the case $d=2$, where the only restriction is that
the number of positive singular values does not exceed the size of the matrix. }%
\new{Last but not least, we provide algorithms to construct tensors with prescribed, feasible
singular values in parallel (\cref{sec:diagonalfeas,alg:numfea}).}
\subsection{\new{Other results on the feasibility problem}\old{Other sets of matricizations}}\label{sec:feashist}
Although many results can be overtaken from the QMP (see \cref{sec:QMC}), we will here give a history of the so far independently approached feasibility problem.
For higher--order tensors, several notions of ranks exist, of which the tensor train
and Tucker format (or HOSVD) \cite{LaMoVa00_AMu,Tucker66_Som} capture two particular ones. The problem of feasibility has originally been introduced and defined
by \cite{HaUs17_Ont} for the Tucker decomposition. Shortly afterwards, further steps have been taken in \cite{HaKrUs17_Per}, from which we have overtaken several notations.
However\BC{e1_}{e_} due to the difference between the two mentioned formats, no results could, so far, be transferred.
Through matrix analysis and eigenvalue relations, \cite{DoStLa17_Ont} later introduced
necessary and sufficient linear inequalities
regarding feasibility mostly restricted to the largest Tucker-singular values of tensors with one common mode size. 
Independently, \cite{Seigal18_Gra} proved the same result for the Tucker format provided $n_1 = \ldots = n_d = 2$ using yet other approaches within algebraic geometry. 
\\
This article, on the other hand, is based on a reduction through gauge conditions to coupled, pairwise problems which are then linked to eigenvalue problems and so called honeycombs \cite{KnTa01_Hon}.
\new{In our tensor train case, which to the best of our knowledge has not been dealt with before, honeycombs
as well as \cite{DaHa05_Qua} fortunately provide both a theoretical and practical resolution to the simpler pairwise problem (see \cref{sec:results}). }%
\old{In our tensor train case, which to the best of our knowledge has not been dealt with before, honeycombs and the Horn conjecture fortunately provide a near full resolution to the problem (see \repl{subsection 1.3}{\cref{sec:results})}.}%
The connection of feasibility to the Horn conjecture has, to a smaller extent, also
synchronously and again independently been investigated by the afore mentioned article \cite{DoStLa17_Ont}, as they deal with yet different eigenvalue problems.
\old{We like to remark that the initial approach from our work is in fact transferable.}\new{As already mentioned in \cref{sec:QMC}, } an analogous way of decoupling can
be applied to the Tucker case and indeed any other hierarchical format,
so that any such feasibility problem for a $d$th-order tensor can be reduced to the pairwise problems as in the tensor train format 
and/or the Tucker format in three dimensions.

%
%
%
%
\subsection{Organization of article}\label{sec:orga}
\new{\BC{kb1}{kb}%
In \cref{sec:reduction}, we use the \textit{standard representation},
an essentially unique representation which meets important gauge conditions,
to reduce the problem of TT-feasibility to only pairs of tuples of singular values.\
In \cref{sec:feasofpairs}, we show the relation to the Horn conjecture, 
give a short overview of related results, and
apply these to our problem with the help of \textit{honeycombs} in \cref{sec:honeycomb}.
We thereby identify the topological structure of sets
of squared TT-feasible singular values as cones, which we further investigate in \cref{sec:coneofsfv}.
Related algorithms can be found in \cref{sec:algo}.%
}


\section{Reduction to mode-wise eigenvalues problems}\label{sec:reduction}
For simplicity, for the remainder of the article, we set $r_0 = r_d = 1$ as well as $\sigma_+^{(0)} = \sigma_+^{(d)} = 1$.
The set of all tensors with (TT-)rank $r$ is denoted as $TT(r) \subset \K^{n_1 \times \ldots \times n_d}$ (\cite{Os11_Ten}). This set
is closely related to so called \textit{representations} (or decompositions) \new{$G = (G_1,\ldots,G_d)$, 
where each so called \textit{core} $G_\mu \in (\K^{r_{\mu-1}\times r_\mu})^{\{1,\ldots,n_\mu\}}$ is an array of matrices $G_\mu(i_\mu)\in \K^{r_{\mu-1}\times r_\mu}$, $i_{\mu} = 1,\ldots,n_\mu$, for $\mu = 1,\ldots,d$.
The product $\boxtimes$ which we define for such in \cref{def:TTformat} can be viewed as generalization of the outer product $\otimes$ for vectors in $\K^{n_\mu} \cong (\K^{1 \times 1})^{\{1,\ldots,n_\mu\}}$.
For now we call $r = (r_1,\ldots,r_{d-1}) \in \N^{d-1}$\BC{d2}{d} the size of $G$ (cf. \cref{thm:TTex}). 
} \old{. For simplicity, we set $r_0 = r_d = 1$.}

\new{%
\begin{definition}[Representation map]\label{def:TTformat}\BC{e1}{e}
For representations $G$ of size $r \in \N^{d-1}$ as above, we define the {\normalfont representation map} $\tau_r$ via 
\begin{align*}
 \tau_r: & \bigtimes_{\mu = 1}^d (\K^{r_{\mu-1}\times r_\mu})^{\{1,\ldots,n_\mu\}} \rightarrow \K^{n_1 \times \ldots \times n_d}, \quad \tau_r(G) := A
\end{align*}
where each entry of the tensor $A$ is a product of matrices in $G$,
\begin{align*}
 A(i_1,\ldots,i_d) := G_1(i_1)\cdot\ldots\cdot G_d(i_d),\quad \forall i \in \bigtimes_{\mu = 1}^d \{1,\ldots,n_\mu\}.
\end{align*}
We further define the associative product $\ck$ for cores $H, N$ via the matrix products $(H \ck N)(i,j) := H(i) \cdot N(j)$,
which generalizes to $A = G_1 \ck \ldots \ck G_d$.
We may skip the symbol $\ck$ in products of a core and matrix (interpreting matrices as cores of length one).
\end{definition}
The cores $G_\mu$ are often also treated as three dimensional tensors, whereas the emphasizing notation we use here
stems from the \textit{matrix product states (MPS) format} \cite{Vi03_Eff}.
 The TT-SVD\footnote{although called SVD, the singular values do not explicitly appear in the decomposition
 as in the matrix SVD}, a generalization of the matrix SVD, provides the following theorem:
 \begin{theorem}[\cite{Os11_Ten}]\label{thm:TTex}\BC{e2}{e}\BC{f1}{f}
  It holds $\mathrm{range}(\tau_r) = \bigcup_{ \widetilde{r} \leq r } TT(\widetilde{r})$, 
where $\widetilde{r} \leq r \in \N^{d-1}$ is to be read entry-wise.
 \end{theorem}
Hence, for every tensor with (TT-)rank $r$, $A \in TT(r)$, there is a representation $G$ of size $r$ for which $A = \tau_r(G)$. 
One therefore also says $G$ has rank $r$.}
These representations will allow us to change the perspective on feasibility and reduce
the problem from a $(d-1)$-tuple to local, pairwise problems.
\begin{definition}[Left and right unfoldings]\label{def:unfoldings}
  For a core $H$ with $H(i) \in \K^{k_1 \times k_2}$, $i = 1,\ldots,\new{m}$,
  the {\normalfont left unfolding} $\lhb{H}\in\K^{k_1 \cdot \new{m} \times k_2}$ is obtained by stacking the matrices $H(i)$ on top of each other in one column and
  likewise the {\normalfont right unfolding} $\rhb{H}\in\K^{k_1 \times k_2 \cdot \new{m}}$ is formed by stacking the same matrices, but side by side, in one row.
  In explicit,
  \begin{align*}
    \left(\lhb{H}\right)_{(\ell,j), q}:= \left(H(j)\right)_{\ell,q}, \quad \left(\rhb{H}\right)_{\ell,(q,j)}
    &:= \left(H(j)\right)_{\ell,q},\quad 
  \end{align*}
  for $1\le j\le \new{m}$, $1\le\ell\le k_1$ and $1\le q\le k_2$. \\
  $H$ is called {\normalfont left-unitary}\BC{k1}{k}
  if $\lhb{H}$ is column-unitary, and {\normalfont right-unitary} if $\rhb{H}$ is row-unitary\footnote{for $\K = \R$, unitary is just orthonormal}.
  For a representation $G$, we
  correspondingly define the {\normalfont interface matrices}\BC{g2}{g}
  \begin{align*}
    G^{\leq \mu} & = \lhb{G_1 \ck \ldots \ck G_{\mu}} \in \K^{n_1\ldots n_{\mu} \times r_{\mu}},\\
    G^{\geq \mu} & = \rhb{G_{\mu} \ck \ldots \ck G_d} \in \K^{r_{\mu-1} \times n_{\mu}\ldots n_d}, \quad \new{\mu = 1,\ldots,d}.
  \end{align*} 
  We also use $G^{<\mu} := G^{\leq \mu-1}$ and $G^{>\mu} := G^{\geq \mu+1}$.
\end{definition}
\new{For any tensor $A  = \tau_r(G)$ it hence holds \BC{g1}{g}\BC{h1}{h}
\begin{align*}
 A^{(\{1,\ldots,\mu\})} = G^{\leq \mu} \ G^{> \mu}, \quad \mu = 1,\ldots,d.
\end{align*}}
The map $\tau_r$ is not injective.
However, there is an essentially unique \textit{standard representation} (in terms of uniqueness of the truncated matrix SVD\footnote{Both $U \Sigma V^H$ and $\widetilde{U} \Sigma \widetilde{V}^H$ are truncated SVDs of $A$ if and only if there exists
an unitary matrix $w$ that commutes with $\Sigma$ and for which $\widetilde{U} = U w$ and $\widetilde{V} = V w$.
For any subset of pairwise distinct nonzero singular values, the corresponding submatrix of $w$ needs to be diagonal with entries in $\{ z \in \K \mid |z| = 1 \}$.}).
In the context of matrix product states, it has priorly appeared in \cite{Vi03_Eff} and is frequently referred to as \textit{canonical MPS}.
\new{Instead of just $d$ cores, this extended representation also contains the tuple of TT-singular values. For that matter, it easy to verify that if both $H$ and $N$ are left- or right-unitary, then $H \boxtimes N$ is
left- or right-unitary, respectively.}

\begin{proposition}[Standard representation]\label{prop:stre}\BC{i1}{i}
 Let $A \in \K^{n_1 \times \ldots \times n_d}$ be a tensor and $\Sigma^{(1)} = \diag(\sigma_+^{(1)}),\ \ldots,\ \Sigma^{(d-1)} = \diag(\sigma_+^{(d-1)})$ 
 be square diagonal matrices which contain the positive TT-singular values of $A$. 
 \new{Then there exists an essentially 
 unique (extended) representation
 \begin{align*} \mathcal{G}^\sigma := (\mathcal{G},\sigma) := (\mathcal{G}_1,\sigma^{(1)},\mathcal{G}_2,\sigma^{(2)},\ldots,\mathcal{G}_{d-1},\sigma^{(d-1)},\mathcal{G}_d), \end{align*}
 with cores $\mathcal{G}_\mu \in (\K^{r_{\mu-1} \times r_{\mu}})^{\{1,\ldots,n_\mu\}}$, $r_\mu = \degree(\sigma^{(\mu)})$, $\mu = 1,\ldots,d$,
 for which the following property holds: 
 \begin{enumerate}
 \item[(1)] For each ${\mu} = 1,\ldots,d-1$,
 \begin{align} 
 \label{eq:sSVD} \lhb{\mathcal{G}_1 \ck \Sigma^{(1)} \mathcal{G}_2 \ck \ldots \ck \Sigma^{({\mu}-1)} \mathcal{G}_{{\mu}}} \quad \Sigma^{(\mu)} \quad \rhb{\mathcal{G}_{{\mu+1}} \Sigma^{({\mu}+1)} \ck \ldots \ck \mathcal{G}_d}
 \end{align}
 is a (truncated) matrix SVD of $A^{(\{1,\ldots,{\mu}\})}$.
\end{enumerate}
 Essentially unique here means that for any other such representation $\widetilde{\mathcal{G}}^\sigma$, it holds $\widetilde{\mathcal{G}}_\mu = w_{\mu-1}^H \mathcal{G}_\mu w_{\mu}$, $\mu = 1,\ldots,d$,
 where each $w_\mu$ is a unitary matrix that commutes with $\Sigma^{(\mu)}$ (and $w_0 = w_d = 1$).}
\end{proposition}
\new{
\begin{corollary}\label{cor:equivprop}
Property (1) in \cref{prop:stre} is equivalent to:
\begin{itemize}
  \item[(2)] It holds
 \begin{align}\label{eq:AequalG}
  A = \mathcal{G}_1 \ck \Sigma^{(1)} \ck \mathcal{G}_2 \ck \Sigma^{(2)} \ck \ldots \ck \mathcal{G}_{(d-2)} \ck \Sigma^{(d-1)} \ck \mathcal{G}_d
 \end{align}
and $\mathcal{G}_1$, $\Sigma^{({\mu}-1)} \mathcal{G}_{\mu}$, ${\mu} = 2,\ldots,d-1$, are left-unitary and $\mathcal{G}_{\mu} \Sigma^{(\mu)}$, ${\mu} = 2,\ldots,d-1$, $\mathcal{G}_d$ are right-unitary\BC{k2}{k} (cf. \cref{def:unfoldings}).
\end{itemize}
Hence also this property provides essential uniqueness.
\end{corollary}
}
%
\begin{proof} \textit{(of \cref{prop:stre})}\BC{l1}{l}\\
\textit{uniqueness:}
 In the following, each $w_\mu$ denotes some unitary matrix that commutes (therefore the lower case letter) with $\Sigma^{(\mu)}$.
 Let there 
 be two such representations $\widetilde{\mathcal{G}}^\sigma$ and $\mathcal{G}^\sigma$. 
 First, $\lhb{\mathcal{G}_1} = \lhb{\widetilde{\mathcal{G}}_1} w_1$ since
 both left-unfoldings contain the left-singular vectors of $A^{(\{1\})}$ due to \cref{eq:sSVD}.
 By induction hypothesis (IH), let $\widetilde{\mathcal{G}}_s = w_{s-1}^H \mathcal{G}_s w_s$ for $s < {\mu}$. 
 Analogously, we have 
 \begin{align*}
 & (\mathcal{G}_1 \ck \ldots \ck \Sigma^{({\mu}-1)} \mathcal{G}_{{\mu}}) \overset{\cref{eq:sSVD}}{=} (\widetilde{\mathcal{G}}_1 \ck \Sigma^{(1)} \widetilde{\mathcal{G}}_2 \ck \ldots \ck \Sigma^{({\mu}-2)} \widetilde{\mathcal{G}}_{{\mu-1}} \ck \Sigma^{({\mu}-1)} \widetilde{\mathcal{G}}_{{\mu}}) w^H_\mu \\
  & \overset{(IH)}{=} ({\mathcal{G}}_1 w_1 \ck \Sigma^{(1)} w_1^H \mathcal{G}_2 w_2 \ck \ldots \ck \Sigma^{(\mu-2)} w_{\mu-2}^H \mathcal{G}_{\mu-1} w_{\mu-1} \ck \Sigma^{({\mu}-1)} \widetilde{\mathcal{G}}_{{\mu}}) w^H_\mu \\  
  & \overset{w_\mu \Sigma^{(\mu)} w_\mu^H = \Sigma^{(\mu)}}{=} {\mathcal{G}}_1 \ck \Sigma^{(1)} \mathcal{G}_2 \ck \ldots \ck \Sigma^{({\mu}-2)} \mathcal{G}_{\mu-1} \ck \Sigma^{(\mu-1)} (w_{\mu-1} \widetilde{\mathcal{G}}_{{\mu}} w^H_\mu) 
 \end{align*}
 Since $T := \mathcal{G}_1 \ck \ldots \ck \Sigma^{({\mu}-2)} \mathcal{G}_{\mu-1}$ is left-unitary by \cref{eq:sSVD},
 the map $H \mapsto T \ck \Sigma^{({\mu}-1)} H$ is injective, and
 it follows $\widetilde{\mathcal{G}}_\mu = w_{\mu-1}^H \mathcal{G}_\mu w_\mu$. This completes the inductive argument. \\\\
 \textit{existence (constructive):}\BC{l2}{l}
 Let $G$ be a representation of $A = \tau_r(G)$, where $G_\mu$, $\mu = 2,\ldots,d$ are right-unitary (this can always be achieved
 using the degrees of freedom within a representation) as well as $V_0 := 1$, $\Sigma^{(0)} := 1$.
 For $\mu = 1,\ldots,d-1$, let the cores $\mathcal{G}_\mu$, $U_\mu$ and the matrix $V_\mu$ be defined via
 \begin{align*}
  \lhb{U_\mu} \ \Sigma^{(\mu)} \ V_\mu^H \overset{\mathrm{SVD}}{:=} \lhb{\Sigma^{(\mu-1)} V_{\mu-1}^H G_\mu}, \quad \mathcal{G}_\mu := (\Sigma^{(\mu-1)})^{-1} U_\mu.
 \end{align*}
 as well as $\mathcal{G}_d := V_{d-1}^H G_d$.
 By construction, \cref{eq:AequalG} holds and each $\Sigma^{(\mu-1)} \mathcal{G}_\mu = U_\mu$ is left-unitary. 
 Since further each $\lhb{U^{\leq \mu}} \Sigma^{(\mu)} \rhb{V_\mu^H G^{>\mu}}$ is an SVD of $A^{(\{1,\ldots,\mu\})}$, $\mu = 1,\ldots,d-1$, also
 \cref{eq:sSVD} holds true.
%
\end{proof}
It is also possible to construct the standard representation directly from $A$ by defining\BC{l3}{l}
  $\lhb{U_\mu} \ \Sigma^{(\mu)} \ B^{(\{1\})}_\mu \overset{\mathrm{SVD}}{:=} B^{(\{1,2\})}_{\mu-1}$, $B_\mu \in \K^{r_\mu \times n_{\mu+1} \times \ldots \times n_d}$, $\mathcal{G}_\mu := (\Sigma^{(\mu-1)})^{-1} U_\mu,$
for $\mu = 1,\ldots,d-1$ as well as $\rhb{\mathcal{G}_d} := B_{d-1}^{(\{1\})}$, and the starting value $B_0^{(\{1,2\})} := A^{(\{1\})}$.
\begin{proof} \textit{(of \cref{cor:equivprop})} 
\textit{``\ $(2) \Rightarrow (1)$'':} Follows directly by transitivity of left- or right-unitary. \\
\textit{``\ $(1) \Rightarrow (2)$'':} 
In the previous construction in the proof of \cref{prop:stre}, the core $\mathcal{G}_{\mu} \Sigma^{({\mu})} = (\Sigma^{(\mu-1)})^{-1} U_\mu \Sigma^{(\mu)} = V_{\mu-1}^H G_\mu V_\mu$
 is right-unitary ($V_0 := 1$, $\Sigma^{(0)} := 1$) and $\Sigma^{(\mu-1)} \mathcal{G}_\mu$ is left-unitary. 
 Due to \cref{prop:stre}, property $(1)$ provides the (essential) uniquenes of that $\mathcal{G}^\sigma$.
 Hence, these constraints hold independently of the construction.
\end{proof}

\old{
\begin{corollary}[Inverse statement]
Let $(\Sigma^{(0)},\mathcal{G}_1,\Sigma^{(1)},\ldots,\mathcal{G}_d,\Sigma^{(d+1)})$ be such that each $\sigma_+^{(\mu)}$ is a positive, weakly
decreasing $r_\mu$-tuple and property $2$ of \repl{Proposition 2.5}{\cref{prop:stre}} is fulfilled. 
Then $A = \tau_r(\mathcal{G})$ is a tensor in $TT(r)$ with TT-singular values $\sigma$.
\end{corollary}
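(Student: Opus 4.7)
The plan is to show that property~2 of Lemma~\ref{stre}, combined with positivity and weak decrease of each $\sigma_+^{(\mu)}$, already forces property~1 to hold. Once the global orthogonality of the interface matrices is in place, the $\Sigma^{(\mu)}$ must contain the singular values of the corresponding matricizations by uniqueness of the SVD, which is all the corollary claims.

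First I would pass from the local orthogonality of each $\Sigma^{(\mu-1)}\mathcal{G}_\mu$ and $\mathcal{G}_\mu\Sigma^{(\mu)}$ to global orthogonality of the interface matrices $\mathcal{G}^{\leq\mu}$ and $\mathcal{G}^{>\mu}$. The key ingredient is the observation recorded just before Lemma~\ref{stre}: $\boxplus$ preserves left- and right-orthogonality. Applied inductively, it yields column-orthogonality of
\[
\mathcal{G}^{\leq\mu} = \lhb{\Sigma^{(0)}\mathcal{G}_1 \ck \ldots \ck \Sigma^{(\mu-1)}\mathcal{G}_\mu}
\]
and row-orthogonality of $\mathcal{G}^{>\mu}$ for every $\mu$. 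By an associative regrouping of the product defining $\tau_r(\mathcal{G})$---placing the factor $\Sigma^{(\mu)}$ between the first $\mu$ and the last $d-\mu$ cores---the $\mu$-th matricization reads
\[
A^{(\{1,\ldots,\mu\})} = \mathcal{G}^{\leq\mu}\,\Sigma^{(\mu)}\,\mathcal{G}^{>\mu}.
\]

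Combined with the orthogonality just established and the fact that $\Sigma^{(\mu)}$ is diagonal with positive, weakly decreasing entries, this is a (truncated) SVD of $A^{(\{1,\ldots,\mu\})}$. Uniqueness of the non-zero singular values of a matrix then identifies $\sigma_+^{(\mu)}$ with the singular spectrum of the $\mu$-th matricization for every $\mu$, so $A$ has the prescribed singular spectrum $\sigma$. Positivity of every entry of every $\sigma_+^{(\mu)}$ furthermore forbids any rank collapse, so the TT-rank of $A$ equals $r$ and $A \in TT(r)$. I do not expect a real obstacle here: the $\boxplus$-inheritance of orthogonality is explicitly granted, the matricization identity is the same associative rearrangement already used constructively in the existence part of Lemma~\ref{stre}, and the remaining conclusions are classical uniqueness of the SVD.
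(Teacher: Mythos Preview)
Your proposal is correct and is exactly the argument the paper has in mind; the paper states the corollary without proof because it follows immediately from the $\boxplus$-inheritance of orthogonality (noted just before Lemma~\ref{stre}) together with the factorization~\eqref{sSVD}. You have simply made explicit the two-line reasoning the paper leaves implicit.
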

\begin{definition}[Set of weakly decreasing tuples/sequences]
 Let $\D^n_{\geq 0}$, $n \in \N \cup \{\infty\}$, be the set of all weakly decreasing tuples (or sequences) of real numbers with finitely many nonzero elements ($n$ is to be read as index).
 For $n \neq \infty$, the negation $-v \in \D^n$ of $v \in \D^n$ is defined via $-v := (-v_n, \ldots, -v_1)$.
 The positive part $v_{+} \in \D_{> 0}^{\degree(v)}$ is defined as the positive elements of $v$, where $\degree(v) := |\{i \mid v_i > 0\}|$ is its degree.
\end{definition}
}
\begin{corollary}\label{cor:inst}
Let $\mathcal{G}^\sigma = (\mathcal{G}_1,\sigma^{(1)},\mathcal{G}_2,\ldots,\sigma^{(d-1)},\mathcal{G}_d)$\BC{m1}{m} such that property $(2)$ in \cref{cor:equivprop} is fulfilled. 
Then $A$ is a tensor in $TT(r)$ with TT-singular values $\sigma$ and standard representation $\mathcal{G}^\sigma$.
\end{corollary}
\begin{definition}[Set of weakly decreasing tuples/sequences]\label{def:weakdectuple}\BC{b1}{b}\BC{n1}{n}%
 For $n \in \N$, let $\D^n \subset \R^n$ be the cone of weakly decreasing $n$-tuples and let $\D^n_{\geq 0} := \D^n \cap \R^n_{\geq 0}$ be its restriction to non-negative numbers.
 Further, let $\D^\infty_{\geq 0} \subset \R^{\N}$ be the cone of weakly decreasing, non-negative sequences with
 finitely many non-zero entries. \\
 The positive part $v_{+} \in \D_{> 0}^{\degree(v)}$ is defined as the positive elements of $v$, where $\degree(v) := \max_{i : v_i > 0} i$ is its degree.
 For $n \neq \infty$, the negation $-v \in \D^n_{\leq 0}$ of $v \in \D^n_{\geq 0}$ is defined via $-v := (-v_n, \ldots, -v_1)$ (cf. \cite{KnTa01_Hon}).
\end{definition}
For example, for $\gamma = (2,2,1,0,0,\ldots) \in \D^\infty_{\geq 0}$, we have $\degree(\gamma) = 3$ and $\gamma_+ = (2,2,1) \in \D^3_{> 0}$ as well as $-\gamma_+ = (-1,-2-2)$.
Similar to before, we will denote $\Gamma := \diag(\gamma_+) = \left(\begin{smallmatrix} 2 & 0 & 0 \\ 0 & 2 & 0 \\ 0 & 0 & 1 \end{smallmatrix}\right)$.
With a tilde, we will emphasize that a tuple may contain zeros, that is $\widetilde{\gamma} \in \{ v \in \mathcal{D}^n_{\geq 0} \mid v_+ = \gamma_+, \ n \geq \degree(\gamma) \}$\BC{o1}{o}.
For example, we may have $\widetilde{\gamma} = (2,2,1,0) \in \mathcal{D}_{\geq 0}^4$.\\\\
%
%
By basic linear algebra, a left-unitary core $H \in (\K^{1 \times k})^{\{1,\ldots,m\}}$ (analogously a right-unitary core $H \in (\K^{k \times 1})^{\{1,\ldots,m\}}$) exists
if and only if $k \leq m$. In three dimensions, the decoupling through the standard representation hence yields:
%
\begin{corollary}\label{lem:feasofapair}\BC{kc1}{kc}
 For a natural number $m \in \N$, a pair $(\gamma,\theta) \in \D^\infty_{\geq 0} \times \D^\infty_{\geq 0}$ is feasible for the triplet $(\degree(\gamma), m, \degree(\theta))$
 if and only if there exists a core $H \in (\K^{\degree(\gamma) \times \degree(\theta)})^{\{1,\ldots,m\}}$
 for which $\Gamma H$ is left-orthogonal and $H \Theta$ is right-orthogonal.
 \end{corollary}
 \begin{proof}
 Follows directly from \cref{cor:equivprop,cor:inst}. 
 \end{proof}
 \begin{corollary}[Decoupling] \label{cor:pairfeas}\BC{p1}{p}
 $\sigma \in (\D^\infty_{\geq 0})^{d-1}$ is feasible for $n \in \N^d$ if and only
 if $\degree(\sigma^{(1)}) < n_1$, $\degree(\sigma^{(d-1)}) < n_d$ and for each $\mu = 2,\ldots,d-1$, the pair $(\sigma^{(\mu-1)},\sigma^{(\mu)})$ is
 feasible for $(\degree(\sigma^{(\mu-1)}),n_\mu,\degree(\sigma^{(\mu)}))$.
 \end{corollary}
 \begin{proof}
 Follows directly from \cref{cor:inst,cor:equivprop,lem:feasofapair}. 
%
%
%
%
\end{proof}
%
%
%

%
%
\old{\begin{definition}[feasibility]\label{def:fe}
For $\nu < \mu \in \N$, let $\sigma = (\sigma^{(\nu)},\ldots,\sigma^{(\mu)}) \in \D^{\infty}_{\geq 0} \times \ldots \times \D^{\infty}_{\geq 0}$ be a list of weakly decreasing sequences.
Then $\sigma$ is called {\normalfont for $(n_{\nu+1},\ldots,n_{s}) \in \N^{\mu-\nu}$} 
if there exist cores $\mathcal{G}_{\nu+1},\ldots,\mathcal{G}_{\mu}$ with $\mathcal{G}_{s} \in (\R^{r_{{s}-1} \times r_{{s}}})^{n_s}$ (that is, $\mathcal{G}_{s}$ has length $n_s$) for
$r_s := \degree(\sigma^{(s)})$, such that 
$\Sigma^{({s}-1)} \mathcal{G}_{s}$ is left-orthogonal and $\mathcal{G}_{s} \Sigma^{(s)}$ is right-orthogonal, for each ${s} = \nu,\ldots,\mu$. 
\end{definition}
Note that due to \cref{cor:inst}, if $\nu = 0, \mu = d$, $r_0 = r_d = 1$
and $\sigma^{(0)} = \sigma^{(d)} = \|A\|_F$, this coincides with the feasibility of $(\sigma^{(1)},\ldots,\sigma^{(d-1)})$ for a tensor (cf. \cref{def:deffeasibility}).
Using the standard representation, global feasibility can be decoupled into smaller and much simpler problems. }
\old{
\begin{theorem}[Reduction to mode-wise eigenvalue problems]
\hspace*{0cm}
\begin{enumerate}
 \item Let $\mu - \nu > 1$: For each single $h \in \mathbb{N}$, $\nu < h < \mu$, it holds: The list $\sigma$, as in \cref{def:fe}, is feasible for $(n_{\nu+1},\ldots,n_{\mu})$ if and only if 
 $(\sigma^{(\nu)},\ldots,\sigma^{(h)})$ is feasible for $(n_{\nu+1},\ldots,n_{h})$ as well as $(\sigma^{(h)},\ldots,\sigma^{(\mu)})$
 is feasible for $(n_{h+1},\ldots,n_{\mu})$.
 \item Let $\mu = \nu + 1$: A pair $(\gamma,\theta) \in \D^{\infty}_{\geq 0} \times \D^{\infty}_{\geq 0}$ of weakly decreasing sequences is feasible for $n \in \N$ if and only if the following holds: \\
 There exist $n$ pairs of symmetric, positive semi-definite matrices $(A^{(i)},B^{(i)}) \in \R^{\degree(\theta) \times \degree(\theta)} \times \R^{\degree(\gamma) \times \degree(\gamma)}$,
 each with identical (multiplicities of) eigenvalues up to zeros, such that $A := \sum_{i=1}^{n} A^{(i)}$ has eigenvalues $\theta_+^2$ and $B := \sum_{i=1}^{n} B^{(i)}$ has eigenvalues $\gamma_+^2$ .
\end{enumerate}
\end{theorem}}
\new{\begin{theorem}[Equivalence to an eigenvalue problem]\label{thm:fundthm}
     Let $m \in \N$. A pair $(\gamma,\theta) \in \D^{\infty}_{\geq 0} \times \D^{\infty}_{\geq 0}$ is feasible for $(\degree(\gamma),m,\degree(\theta))$ if and only if the following holds: 
 there exist $m$ pairs of Hermitian\footnote{for $\K = \R$, Hermitian is just symmetric and the conjugate transpose $\cdot^H$ is just the transpose $\cdot^T$}, positive semi-definite matrices $(A^{(i)},B^{(i)}) \in \K^{\degree(\theta) \times \degree(\theta)} \times \K^{\degree(\gamma) \times \degree(\gamma)}$,
 each with identical (multiplicities of) eigenvalues up to zeros, such that $A := \sum_{i=1}^{m} A^{(i)}$ has eigenvalues $\theta_+^2$ and $B := \sum_{i=1}^{m} B^{(i)}$ has eigenvalues $\gamma_+^2$ .
     \end{theorem}}
\begin{proof} \textit{(constructive)}%
\old{The first statement is merely transitivity. For $\mu = \nu + 1$, } We show both directions separately.\\ 
 ``$\Rightarrow$'': Let $(\gamma,\theta)$ be feasible for $m$. Then by \old{definition }\new{\cref{lem:feasofapair}}, for $\Gamma = \diag(\gamma_+)$, $\Theta = \diag(\theta_+)$ and a single core $\widehat{N}$, we have both
 $\sum_{i = 1}^m \widehat{N}(i)^H \ \Gamma^2 \ \widehat{N}(i) = I$ as well as $\sum_{i = 1}^m \widehat{N}(i) \ \Theta^2 \ \widehat{N}(i)^H = I$.
 By substitution of $ \widehat{N} = \Gamma^{-1} \ N \ \Theta^{-1}$, this is equivalent to
 \begin{align} 
 \sum_{i = 1}^m N(i)^H \ N(i) = \Theta^2, & \quad \sum_{i = 1}^m \ N(i) \ N(i)^H = \Gamma^2. \label{eq:red}
 \end{align}
 Now, for $A^{(i)} := N(i)^H \ N(i)$ and $B^{(i)} := N(i) \ N(i)^H$, we have found matrices as desired, since the eigenvalues of $A^{(i)}$ and $B^{(i)}$ are
 each the same (up to zeros). \\
 ``$\Leftarrow$'': Let $A^{(i)}$ and $B^{(i)}$ be matrices as required. Then, by eigenvalue decompositions, $A = Q_A \ \Theta^2 \ Q_A^H$, $B = Q_B \ \Gamma^2 \ Q_B^H$ for unitary $Q_A$, $Q_B$ and
 thereby $\sum_{i=1}^{m} Q_A^H \ A^{(i)} \ Q_A = \Theta^2$ and $\sum_{i=1}^{m} Q_B^H \ B^{(i)} \ Q_B = \Gamma^2$. Then again, by truncated eigenvalue decompositions of 
 these summands, we obtain
 \[ Q_A^H \ A^{(i)} \ Q_A = V_i \ S_i \ V_i^H, \quad Q_B^H \ B^{(i)} \ Q_B = U_i \ S_i \ U_i^H, \quad S_i \in \R^{r \times r} \]
 for $r = \min(\degree(\gamma),\degree(\theta))$, unitary (eigenvectors) $V_i, U_i$ and shared (positive eigenvalues) $S_i$. With the choice $N(i) := U_i \ S_i^{1/2} \ V_i^H$, we arrive at \cref{eq:red}, which is equivalent to the desired statement.
\end{proof}

\new{\begin{remark}[Diagonalization]\label{rem:diag}
 Since the condition regarding the sums of Hermitian matrices in \cref{thm:fundthm} remains true under conjugation, we may assume, without loss of generality, that $A = \Theta^2$ and $B = \Gamma^2$. 
\end{remark}}

\section{Feasibility of pairs}\label{sec:feasofpairs}\BC{p2}{p}
\new{We have shown in the previous section, i.e. \cref{cor:pairfeas}, that we only have to consider the feasibility of pairs $(\gamma,\theta)$
for mode sizes $(\degree(\gamma),m,\degree(\theta))$. In order to avoid the redundant entries $\degree(\gamma)$
and $\degree(\theta)$, we will from now on abbreviate as follows:
\begin{definition}[Feasibility of pairs]\label{def:feasofpairs}\BC{w2}{w}
 For $m \in \N$, we say a pair $(\gamma,\theta)$ is feasible for $m$ if and only if it is feasible for $(\degree(\gamma),m,\degree(\theta))$ (cf. \cref{def:deffeasibility}).
\end{definition}
As outlined in \cref{sec:QMPeqTFP}, the property is equivalent to the compatibility of $(\gamma^2,\theta^2)$ for $(\degree(\gamma),m)$ given $\mathcal{K} = \{\{1\},\{1,2\}\}$.\BC{z6}{z}
In fact, there exist several results on this topic as discussed in \cref{sec:QMC}, e.g. that compatible pairs form a cone. In the following, we analyze the problem from the 
different perspective provided by \cref{thm:fundthm}. }

\subsection{Constructive, diagonal feasibility}\label{sec:diagonalfeas}

The feasibility of pairs is a reflexive and symmetric relation, but it is not transitive.
In some cases, verification can be easier:\BC{kf1}{kf}

\begin{lemma}[Diagonally feasible pairs]\label{lem:trivial}
Let $(\gamma,\theta) \in \mathcal{D}^{\infty}_{\geq 0} \times \mathcal{D}^{\infty}_{\geq 0}$ as well as
$a^{(1)},\ldots,a^{(\new{m})} \in \R^r_{\geq 0}$, $r = \mathrm{max}(\degree(\gamma),\degree(\theta))$, and permutations $\pi_1,\ldots,\pi_{\new{m}} \in S_r$ such that
\[ a^{(1)}_i + \ldots + a^{(\new{m})}_i = \gamma_i^2, \quad a^{(1)}_{\pi_1(i)} + \ldots + a^{(\new{m})}_{\pi_{\new{m}}(i)} = \theta_i^2, \quad i = 1,\ldots,r. \]
Then $(\gamma,\theta)$ is feasible for $\new{m}$ (we write \textit{diagonally feasible} in that case).
For $\new{m}\new{, r_1, r_2 \in \N}$, $\gamma_+^2 = (1,\ldots,1)$ of length $r_1$ and $\theta_+^2 = (k_1,\ldots,k_{r_2}) \in \D^{r_2}_{\new{\geq 0}} \cap \{1,\ldots,\new{m}\}^{r_2}$, with $\|k\|_1 = r_1$,
the pair $(\gamma,\theta)$ is diagonally feasible for $\new{m}$.
\end{lemma}
\begin{proof}
The given criterion is just the restriction to diagonal matrices in \cref{thm:fundthm}. 
All sums of zero-eigenvalues can be ignored, i.e. we also find
diagonal matrices of actual sizes $\degree(\gamma) \times \degree(\gamma)$ and $\degree(\theta) \times \degree(\theta)$.
The subsequent explicit set of feasible pairs follows immediately by restricting $a^{(\ell)}_i \in \{0,1\}$ and by using appropriate permutations.
\end{proof}
For example, to show that $(\gamma,\theta)$, $\gamma_+^2 = (1,1,1,1)$, $\theta_+^2 = (2,2)$, is feasible for $m = 2$, we can set
$a^{(1)} = (1,1,0,0)$, $a^{(2)} = (0,0,1,1)$ and $\pi_1 = \mathrm{Id}$, $\pi_2 = (1,3,2,4)$. The resulting matrices in \cref{thm:fundthm}
then are $B^{(1)} = \mathrm{diag}((1,1,0,0))$, $B^{(2)} = \mathrm{diag}((0,0,1,1))$ as well as $A^{(1)} = A^{(2)} = \mathrm{diag}((1,1))$.
Following the procedure in \cref{thm:fundthm}, we obtain the single core $\new{N}$, for which $\Gamma \new{N}$, $\new{N} \Theta$ are left- and right-unitary, respectively:
\begin{align*}
 \new{N}(1) = \left[\begin{smallmatrix}
                         \frac{1}{\sqrt{2}}  &       0 \\
         0  &  \frac{1}{\sqrt{2}}\\
         0  &       0\\
         0  &       0
                      \end{smallmatrix}\right], \quad
                       \new{N}(2) = \left[\begin{smallmatrix}
         0  &       0\\
         0  &       0\\
                                  \frac{1}{\sqrt{2}}  &       0 \\
         0  &  \frac{1}{\sqrt{2}}
                      \end{smallmatrix}\right].
\end{align*}
Although for $\new{m} = 2$, $r \leq 3$, each feasible pair happens to be diagonally feasible, this does not hold in general.
For example, the pair $(\gamma,\theta)$,
\begin{align} \gamma_+^2 = (7.5,5) \quad \mbox{and} \quad \theta^2_+ = (6,3.5,2,1), \label{eq:not_triv_pair} \end{align} 
is feasible (cf. \cref{eq:n2leqn1} or \cref{fig:plot_not_triv_feasible}) for $\new{m} = 2$, but it is quite easy to verify that it
is not diagonally feasible. 

\begin{definition}[Set of feasible pairs]\label{def:setoffeaspairs}
Let $\mathcal{F}_{\new{m},(r_1,r_2)}$ be the set of pairs $(\widetilde{\gamma},\widetilde{\theta}) \in \mathcal{D}^{r_1}_{\geq 0} \times \mathcal{D}^{r_2}_{\geq 0}$,
for which $(\gamma,\theta) = ((\widetilde{\gamma},0,\ldots),(\widetilde{\theta},0,\ldots))$ is feasible for $\new{m}$ (cf. \cref{def:feasofpairs}), and
\[ \mathcal{F}^2_{\new{m},(r_1,r_2)} := \{ (\gamma_1^2, \ldots, \gamma_{r_1}^2, \theta_1^2, \ldots, \theta_{r_2}^2) \mid (\widetilde{\gamma},\widetilde{\theta}) \in \mathcal{F}_{\new{m},(r_1,r_2)} \}. \]
\end{definition}%
\new{The following theorem is a special case of \cref{eq:n2leqn1} and features a constructive proof as outlined below.}%
\begin{theorem}\label{thm:prop1}
Let $m\in \N$. If $r_1, r_2 \leq \new{m}$, then 
\[ \mathcal{F}_{\new{m},(r_1,r_2)} = \mathcal{D}^{r_1}_{\geq 0} \times \mathcal{D}^{r_2}_{\geq 0} \cap \{(\widetilde{\gamma},\widetilde{\theta}) \mid \|\widetilde{\gamma}\|_2 = \|\widetilde{\theta}\|_2\},
\]
that is, any pair $(\gamma,\theta) \in \D^{\infty}_{\geq 0} \times \D^{\infty}_{\geq 0}$ with $\degree(\gamma), \degree(\theta) \leq \new{m}$, for which the trace property holds true, is (diagonally) feasible for $\new{m}$.
\end{theorem}
\begin{proof}
We give a proof by contradiction. Set $\widetilde{\gamma} = (\gamma_+,0,\ldots,0)$ as well as
$\widetilde{\theta} = (\theta_+,0,\ldots,0)$ such that both have length $\new{m}$. 
Let the permutation $\widetilde{\pi}$ be given by the cycle $(1,\ldots,\new{m})$ and $\pi_\ell := \widetilde{\pi}^{\ell-1}$.
For each $k$, let $R_k := \{ (i,\ell) \mid \pi_\ell(k) = i \}$.  
Now, let the nonnegative (eigen-) values $a^{(\ell)}_i$, \ $\ell,i = 1,\ldots,\new{m}$, form a minimizer
of $w := \|A (1,\ldots,1)^T - \widetilde{\gamma}^2\|_1$, subject to
\[ \sum_{(i,\ell) \in R_k} a^{(\ell)}_i = a^{(1)}_{\pi_1(k)} + \ldots + a_{\pi_{\new{m}}(k)}^{(\new{m})} = \theta_k^2, \ k = 1,\ldots,\new{m}, \]
where $A = \{a^{(\ell)}_i\}_{(i,\ell)}$ (the minimizer exists since the allowed values form a compact set). For $\new{m} = 3$, for example, 
we aim at the following, where $R_3$ has been highlighted.
\begin{align*}
 \begin{pmatrix}
  \greym{a^{(1)}_{\pi_1(1)}} & a^{(2)}_{\pi_2(3)} & \greym{a^{(3)}_{\pi_3(2)}} \\
  \greym{a^{(1)}_{\pi_1(2)}} & \greym{a^{(2)}_{\pi_2(1)}} & a^{(3)}_{\pi_3(3)} \\
  a^{(1)}_{\pi_1(3)} & \greym{a^{(2)}_{\pi_2(2)}} & \greym{a^{(3)}_{\pi_3(1)}}
 \end{pmatrix}
  \cdot \begin{pmatrix}
    1 \\
    1 \\
    1
   \end{pmatrix}
 \rightarrow \begin{pmatrix}
    \gamma_1^2 \\
    \gamma_2^2 \\
    \gamma_3^2
   \end{pmatrix}
\end{align*}
Let further
\[ \#_{\gtrless} := \old{|}\{ i \mid a^{(1)}_i + \ldots + a_i^{(\new{m})} \gtrless \gamma_i^2, \ i = 1,\ldots,\new{m} \}\old{|}. \]
As $\|\gamma\|_2 = \|\theta\|_2$ by assumption, either \new{$\#_>$ and $\#_<$ are both empty or both not empty}. In the first case, we are finished.
Assume therefore there is an $(i,j) \in \#_> \times \#_<$. Then there is an index $\ell_1$ such that
$a^{(\ell_1)}_i > 0$ as well as indices $k$ and $\ell_2$ such that $(i,\ell_1), (j,\ell_2) \in R_k$.
This is however a contradiction, since replacing $a^{(\ell_1)}_i \leftarrow a^{(\ell_1)}_i - \varepsilon$ 
and $a^{(\ell_2)}_j \leftarrow a^{(\ell_2)}_j + \varepsilon$ for some small enough $\varepsilon > 0$ 
is valid, but yields a lower minimum $w$. Hence it already holds $a^{(1)}_i + \ldots + a^{(\new{m})}_i = \gamma_i^2, \quad i = 1,\ldots,\new{m}$.
Due to \cref{lem:trivial}, the pair $(\gamma,\theta)$ is feasible.
\end{proof}
\old{Note that although the proof is by contradiction, }%
The entries\BC{kd1}{kd} $a^{(\ell)}_i$ can be found via a linear programming algorithm, since they are given through linear constraints.
%
A corresponding core can easily be calculated subsequently,
as the proof of \cref{thm:fundthm} is constructive.\\
In the following section, we address 
theory that was subject to a near century long development. Fortunately, 
many results in that area can be transferred ---
last but not least because of the work of A. Knutson and T. Tao
and their \new{illustrative }theory of \textit{honeycombs} \cite{KnTa01_Hon}. \old{It should be noted that with the connection
made through \repl{Theorem 2.14}{\cref{thm:fundthm}}, a complete resolution of the feasibility problem
would establish a vast part of theory for eigenvalues of Hermitian matrices.
Hence at this point, we can rather expect to find answers in the latter area than
the other way around.}

\subsection{Weyl's Problem and the Horn Conjecture}\label{sec:weylhorn}

In 1912, H. Weyl posed a problem \cite{We1912_Das} that asks for an analysis of the following relation.

\begin{definition}[Eigenvalues of a sum of two Hermitian matrices \cite{KnTa01_Hon}]\label{def:sumsofhermi}
Let $\lambda, \mu, \nu \in \mathcal{D}^n$. Then the relation
\begin{align}
 \lambda \boxplus \mu \sim_c \nu \label{eq:simc}
\end{align}
is defined to hold if there exist Hermitian matrices $A,B \in \C^{n \times n}$ and $C := A + B$ with 
eigenvalues $\lambda, \mu$ and $\nu$, respectively.
This definition is straight forwardly extended to more than two summands.\footnote{The symbol $\boxplus$ used in \cite{KnTa01_Hon} only appears within such relations and hints at the addition of $A$ and $B$. 
There is no relation to the earlier used $\boxtimes$.}
\end{definition}
The relation \cref{eq:simc} may\BC{ke1}{ke} equivalently be written as $\lambda \boxplus \mu \boxplus (-\nu) \sim_c 0$ (cf. \cite{KnTa01_Hon}, \cref{def:weakdectuple}).
A result which was discovered much later by Fulton \cite{Fu00_Eig}, which we want to pull forward, states
that there is no difference when restricting oneself to real matrices.%
\begin{theorem}[{\cite[Theorem 3]{Fu00_Eig}}]\label{thm:fultonsym}
 A triplet ($\lambda,\mu,\nu)$ occurs as eigenvalues for an associated triplet of real symmetric matrices
 if and only if it appears as one for Hermitian matrices.
\end{theorem}
\new{
Assuming without loss of generality $\degree(\gamma) \leq \degree(\theta)$, the condition (cf. \cref{thm:fundthm}) for the feasibility of a pair $(\gamma,\theta)$ for $m$ can now be restated as: 
there exist $a_1,\ldots,a_m \in \mathcal{D}_{\geq 0}^{\degree(\gamma)}$ with $a_1 \boxplus \ldots \boxplus a_m \sim_c \gamma_+^2$
and $(a_1,0,\ldots) \boxplus \ldots \boxplus (a_m,0,\ldots) \sim_c \theta_+^2$.
The later \cref{thm:relaxed} uses \cref{thm:fultonsym} to confirm that the initial choice $\K \in \{\R,\C\}$ is also irrelevant regarding the conditions for feasibility.
\\\\}
Weyl and Ky Fan \cite{Fa1949_Ona} were among the first ones to give necessary, linear inequalities to the relation \cref{eq:simc}. %
We refer to the (survey) article 
\textit{Honeycombs and Sums of Hermitian Matrices}\footnote{To the best of our knowledge, in Conjecture 1 (Horn conjecture) on page 176 of the AMS publication,
the relation $\geq$ needs to be replaced by $\leq$. This is a mere typo without any consequences and the authors are most
likely aware of it by now.}
\cite{KnTa01_Hon} by Knutson and Tao, which has been
the main point of reference for the remaining part and serves as historical survey as well (see also \cite{Bh01_Lin}). 
We use parts of their notation as long as we remain within this topic.\BC{kf2}{kf}
Therefor, $m$ remains the number of matrices ($m=2$ in \cref{def:sumsofhermi}), but $n$ denotes the size of the Hermitian matrices and $r$ is used as index.
A. Horn introduced the famous \textit{Horn conjecture} in 1962:


\begin{theorem}[(Verified) Horn conjecture \cite{Ho1962_Eig}]\label{thm:hornconj}
There is a specific set $T_{r,n}$ (defined for example in \cite{Bh01_Lin}) of triplets of monotonically increasing $r$-tuples such that:
The relation $\lambda \boxplus \mu \sim_c \nu$ is satisfied if and only
if for each $(i,j,k) \in T_{r,n}, \ r = 1,\ldots,n-1$, the inequality
\begin{align} \nu_{k_1} + \ldots + \nu_{k_r} \leq \lambda_{i_1} + \ldots + \lambda_{i_r} + \mu_{j_1} + \ldots + \mu_{j_r} \label{eq:hornineq} \end{align}
holds, as well as the trace property $\sum_{i=1}^n \lambda_i + \sum_{i=1}^n \mu_i = \sum_{i=1}^n \nu_i$.
\end{theorem}

As already indicated, the conjecture is correct, as proven \new{through the contributions of }Knutson and Tao (cf. \cref{sec:honeycomb}) 
and Klyachko (\cite{Kl98_Sta}).
Fascinatingly, the quite inaccessible, recursively defined set $T_{r,n}$ can in turn be described
by eigenvalue relations themselves, as stated by W. Fulton \cite{Fu00_Eig}.

\begin{theorem}[Description of $T_{r,n}$ \cite{Fu00_Eig,Ho1962_Eig,KnTa01_Hon}]\label{thm:desTrn}
Let $\bigtriangleup \ell:=(\ell_r - r,\ell_{r-1}-(r-1),\ldots,\ell_2-2,\ell_1-1)\new{ \in \mathcal{D}^r_{\geq 0}}$ for any set or 
tuple $\ell$ of $r$ increasing natural numbers. 
The triplet $(i,j,k)$ of such is in $T_{r,n}$ if and only if for the
corresponding triplet it holds $\bigtriangleup i \boxplus \bigtriangleup j \sim_c \bigtriangleup k$.
\end{theorem}

Even with just diagonal matrices, one can thereby derive various (possibly all) triplets. For example,
Ky Fan's inequality \cite{Fa1949_Ona},
$\sum_{i=1}^k \nu_i \leq \sum_{i=1}^k \lambda_i + \sum_{i=1}^k \mu_i$,
relates to the simple $0 \boxplus 0 \sim_c 0 \in \R^k$, $k = 1,\ldots,n$. A further
interesting property, as already shown by Horn, is given if \cref{eq:hornineq} holds as equality:

\begin{lemma}[\cite{Ho1962_Eig,KnTa01_Hon}]\label{lem:reduc}
Let $(i,j,k) \in T_{r,n}$ and $\lambda \boxplus \mu \sim_c \nu$. Further, let $i^c, j^c, k^c$ be
their complementary indices with respect to $\{1,\ldots,n\}$. Then the following statements are equivalent:
\begin{itemize}
 \item $\nu_{i_1} + \ldots + \nu_{i_r} = \lambda_{i_1} + \ldots + \lambda_{i_r} + \mu_{j_1} + \ldots + \mu_{j_r}$
 \item Any associated triplet of Hermitian matrices $(A,B,C)$ is block diagonalizable into two parts, which
 contain eigenvalues indexed by $(i,j,k)$ and $(i^c, j^c, k^c)$, respectively.
 \item $\lambda|_i \boxplus \mu|_j \sim_c \nu|_k$
 \item $\lambda|_{i^c} \boxplus \mu|_{j^c} \sim_c \nu|_{k^c}$
 \end{itemize}
\end{lemma}
The relation is in that sense split into two with respect to the triplet $(i,j,k)$.

%



\section{Honeycombs and hives}\label{sec:honeycomb}

The following result by Knutson and Tao poses a complete resolution to Weyl's problem
and is based on preceding breakthroughs \cite{HeRo95_Eig, Kl98_Sta, KnTa99_The, KnTaWo04_The}.
This problem has since then also been generalized, for example \cite{Friedland00_Fin,Fulton00_Eig}.

\subsection{\new{Honeycombs and eigenvalues of sums of hermitian matrices}}
While we can only give a quick introduction, the article \cite{KnTa01_Hon} provides a good understanding of \textit{honeycombs} --- \old{the }\new{a }central tool in 
the verification of the Horn conjecture.
They \old{are designed to exactly reflect the mathematics behind the relation $\sim_c$
and }allow graph theory as well as linear programming to be applied to Weyl's problem.
A honeycombs $h$ (cf. \cref{fig:archetype}) \old{can be described as }\new{is a }two dimensional object, embedded into 
$h \subset \R^3_{\sum = 0} := \{ x \in \R^3 \mid x_1 + x_2 + x_3 = 0\}$,
consisting of line segments (edges or rays), each parallel to one of the cardinal directions $(0,1,-1)$ (north west), $(-1,0,1)$ (north east) or $(1,-1,0)$ (south),
as well as vertices, where those join. Thereby, each segment has exactly one constant coordinate, the collection of which
we formally denote with $\EDGE(h) \in \mathbb{R}^N$, $N = \frac{3}{2} n(n+1)$ (including the boundary rays).
Non-degenerate $n$-honeycombs follow one identical topological structure
and are identifiable through linear constraints:
the constant coordinates of three edges meeting at a vertex add up to zero, and every edge has strictly positive length. 
This leads
to one archetype, as displayed in \cref{fig:archetype} (for $n = 3$). 
The involved eigenvalues appear as \textit{boundary values} $\delta(h) := (\mathfrak{w}(h),\mathfrak{e}(h),\mathfrak{s}(h)) := (\lambda,\mu,-\nu) \in (\mathcal{D}^n)^3$ (west, east and south),
i.e. the constant coordinates of the outer rays. 

\begin{figure}[tbhp]
  \centering
  \ifuseprecompiled
\includegraphics{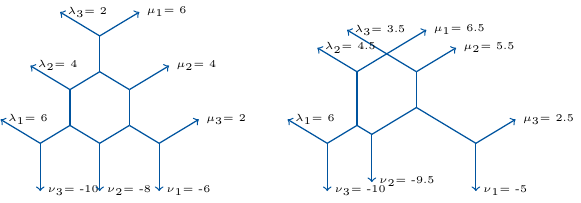}
\else
\setlength\figureheight{3cm}
\setlength\figurewidth{9cm}
\tikzsetnextfilename{plot_archetype}
\input{tikz_base_files/plot_archetype/plot_archetype.tikz}
\fi
\caption{\label{fig:archetype} Left: The archetype of non-degenerate $(n=3)$-honeycombs as described in \cref{sec:honeycomb}.
The rays pointing in directions north west, north east and south have constant coordinates $\mathfrak{w}(h)_i = \lambda_i$, $\mathfrak{e}(h)_i = \mu_i$ and $\mathfrak{s}(h)_i = -\nu_i$, respectively. 
The remaining line segments contribute to the total \textit{edge length} of the honeycomb. Right: A degenerate honeycomb,
where the line segment at the top has been completely contracted. Here, only eight line segments remain to contribute to the total edge length.
}
\end{figure}
The set $\HONEY_n$ of all $n$-honeycombs is identified as the closure of the set of non-degenerate ones, allowing edges of length zero as well.
Thereby, $C = \{ \EDGE(h) \mid h \in \HONEY_n \} \subset \mathbb{R}^N$ is a closed, convex, polyhedral cone.

\begin{theorem}[\new{Relation to honeycombs }\cite{KnTa01_Hon}]
The relation $\lambda \boxplus \mu \sim_c \nu$ is satisfied if and only if
there exists a honeycomb $h$ with boundary values $\delta(h) = (\lambda, \mu, -\nu)$.
\end{theorem}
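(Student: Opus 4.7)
The plan is to prove the equivalence by showing that the existence of a honeycomb with boundary $(\lambda, \mu, -\nu)$ is equivalent to the Horn inequalities \eqref{hornineq} together with the trace property, and then invoking the already-verified Theorem \ref{hornconj}. Thus the real work lies in translating the vertex balance conditions and edge-length nonnegativity of a honeycomb into the combinatorial Horn system encoded by the triplets $T_{r,n}$.

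For the direction ``honeycomb exists $\Rightarrow$ Horn inequalities,'' I would use a dual flow argument on the honeycomb graph. For each triplet $(i,j,k) \in T_{r,n}$, I would construct a weighting on the edges that, via the balance condition at each vertex (the constant coordinates of the three incident edges sum to zero), transports the prescribed boundary rays $\lambda_{i_\ell}, \mu_{j_\ell}, -\nu_{k_\ell}$ into an interior combination. Nonnegativity of the edge lengths, together with the trivalent balance, then forces the Horn inequality $\nu_{k_1} + \ldots + \nu_{k_r} \leq \lambda_{i_1} + \ldots + \lambda_{i_r} + \mu_{j_1} + \ldots + \mu_{j_r}$. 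The trace property follows directly from summing all boundary rays, since every interior edge contributes with canceling signs to this global sum.

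For the converse, I would formulate the existence of a honeycomb with prescribed boundary as a linear feasibility problem: the unknowns are the constant coordinates of the interior edges in the archetype of Figure \ref{archetype}, subject to linear balance equalities at each vertex and to edge-length nonnegativity. An application of Farkas' lemma shows that infeasibility must be certified by a nonnegative combination of these constraints that is inconsistent with the boundary data. The task is then to argue that every such certificate corresponds to (a nonnegative combination of) Horn triplets, inductively using Theorem \ref{desTrn} to recognize each certificate as itself arising from a smaller eigenvalue relation.

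The main obstacle is precisely this Farkas-type converse, which is the content of the Knutson--Tao \emph{saturation theorem}: without it, one only obtains a honeycomb for some integer multiple $N \cdot (\lambda, \mu, -\nu)$, whereas one needs a honeycomb for the original triple. The proof in \cite{KnTa01_Hon} scales integer honeycombs down through intricate combinatorial manipulations (``tinkering'' and the dual hive model) while preserving the boundary, and this is the truly deep ingredient I would defer to. Reducibility (Lemma \ref{reduc}) also plays a role here, as it allows the honeycomb to decompose along tight Horn inequalities, matching the recursive structure of $T_{r,n}$.
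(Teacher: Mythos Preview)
The paper does not prove this theorem; it is quoted as a result from \cite{KnTa01_Hon}, so there is no in-paper argument to compare your attempt against.

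On your sketch itself: routing the proof through Theorem \ref{hornconj} is circular in the paper's own logical structure. The Horn Conjecture is labeled ``verified'' precisely because Knutson and Tao proved it \emph{using} honeycombs; the honeycomb theorem is the input to Theorem \ref{hornconj}, not a consequence of it. You recognize this implicitly when you defer the hard converse to the saturation theorem of \cite{KnTa01_Hon}, but that deferral is essentially a deferral to the proof of the very statement under discussion. Your outline of ``honeycomb $\Rightarrow$ Horn inequalities'' via dual flows on the honeycomb graph is in the right spirit (and close to how Knutson--Tao extract the inequalities from overlays), but the Farkas-type converse you propose cannot be completed without the saturation input, and saturation is not a corollary of Theorem \ref{hornconj} --- it is what makes Theorem \ref{hornconj} true in the first place. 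If you want a non-circular route for the direction ``$\lambda \boxplus \mu \sim_c \nu \Rightarrow$ honeycomb exists'', the argument in \cite{KnTa01_Hon} proceeds by a direct construction from the Hermitian triple (via a large-scale limit of the logarithmic image of an associated spectral curve), not by first checking Horn inequalities.
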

The set of triplets $\{ (\lambda,\mu,-\nu) \new{\in (\mathcal{D}^n)^3 }\mid \lambda \boxplus \mu \sim_c \nu\}$ thus equals
$\BDRY_n := \{ \delta(h) \mid h \in \HONEY_n \}$, which is at the same time the orthogonal projection of the cone $C$ to the
coordinates associated with the boundary (the rays) --- and, as shown in its verification, the very same cone described by the (in)equalities in 
\cref{thm:hornconj}.\\
There is also a related statement implicated by the ones in \cref{lem:reduc}.
If a triplet $(i,j,k) \in T_{r,n}$ yields an equality as in \cref{eq:hornineq}, then for the associated honeycomb $h$, $\delta(h) = (\lambda,\mu,-\nu)$,
it holds
\begin{align}
 h = h_1 \otimes h_2, \quad \delta(h_1) = (\lambda|_i, \mu|_j, -\nu|_k), \ \delta(h_2) = (\lambda|_{i^c}, \mu|_{j^c}, -\nu|_{k^c}) \label{eq:overlay},
\end{align}
which means that $h$ is a literal overlay of two smaller honeycombs. Vice versa, if a
honeycomb is an overlay of two smaller ones, then it yields two separate eigenvalue relations, however
the splitting does not necessarily correspond to a triplet in $T_{r,n}$ \cite{KnTa01_Hon}.
%
%
\subsection{\new{Hives and feasibility of pairs}}
\begin{definition}[Positive semi-definite honeycomb]
We define a positive semi-definite honeycomb $h$ as a honeycomb with boundary values $\mathfrak{w}(h), \mathfrak{e}(h) \geq 0$ and $\mathfrak{s}(h) \leq 0$. 
\end{definition}
%
A honeycomb can connect three matrices. In order to connect $m$ matrices,
chains or systems of honeycombs are put in relation to each other through their
boundary values. Although the phrase \textit{hive} has appeared before as similar object to honeycombs,
to which we do not intend to refer here, we use it to emphasize that a collection of honeycombs is given\footnote{in absence of further \textit{bee} related vocabulary}.
Considerations for simple chains of honeycombs (cf. \cref{lem:sevherm}) have also been made in \cite{KnTa99_The, KnTaWo04_The}, but we need to rephrase these
ideas for our own purposes.
\new{\BC{kg1}{kg}
\begin{definition}[Hives]\label{def:hive}
Let $n,M \in \N$. We define a (pos. semi-definite) $(n,M)$-hive $H$ as a collection of $M$ (pos. semi-definite) $n$-honeycombs $h^{(1)},\ldots,h^{(M)}$.
\end{definition}
\begin{definition}[Structure of hives]
Let $H$ be an $(n,M)$-hive and $B := \{ (i,\mathfrak{b}) \mid i = 1,\ldots,M, \ \mathfrak{b} \in \{\mathfrak{w},\mathfrak{e},\mathfrak{s}\} \}$.
Further, let $\sim_S\ \in B \times B $ be an equivalence relation. We say $H$ has structure $\sim_S$ if the following holds: \\
Provided $(i,\mathfrak{b}) \sim_S (j,\mathfrak{p})$, then if both $\mathfrak{b}$ and $\mathfrak{p}$ or neither of them equal $\mathfrak{s}$, it holds $\mathfrak{b}(h^{(i)}) = \mathfrak{p}(h^{(j)})$, or
otherwise $\mathfrak{b}(h^{(i)}) = -\mathfrak{p}(h^{(j)})$.\\\\%
We define the hive set $\HIVE_{n,M}(\sim_S)$ as set of all $(n,M)$-hives $H$ with structure $\sim_S$.
\end{definition}
In order to specify a structure $\sim_S$, we will only list generating sets of equivalences (with respect to reflexivity, symmetry and transitivity).
\begin{definition}[Boundary map of structured hives]
Let $H$ be an $(n,M)$-hive with structure $\sim_S$. Let further $P := \{ (i,\mathfrak{b}) \mid |[(i,\mathfrak{b})]_{\sim_S}| = 1 \}$ be the set of singletons.
We define the boundary map $\delta_P: \HIVE_{n,M}(\sim_S) \rightarrow (\mathcal{D}^n)^P$ to map
any hive $H \in \HIVE_{n,M}(\sim_S)$ to the function $f_P: P \rightarrow \mathcal{D}^n$ defined via: \\ 
For all $(i,\mathfrak{b}) \in P$, if $\mathfrak{b}$ equals $\mathfrak{s}$, it holds
$f_P(i,\mathfrak{b}) = -\mathfrak{b}(h^{(i)})$, or otherwise $f_P(i,\mathfrak{b}) = \mathfrak{b}(h^{(i)})$.
\end{definition}
}

A single $n$-honeycomb $h$ with boundary values $(\lambda,\mu,-\nu)$ can hence be identified as $(n,1)$-hive $H$ with trivial structure $\sim_S$ \new{generated by the empty set},
singleton set $P = \{(1,\mathfrak{w}),(1,\mathfrak{e}),(1,\mathfrak{s})\}$ and
boundary \old{function }$\delta_P(H) = \{(1,\mathfrak{w})\mapsto \lambda,(1,\mathfrak{e})\mapsto \mu,(1,\mathfrak{s})\mapsto \nu\}$%
\footnote{this denotes $f_P(1,\mathfrak{w}) = \lambda$, $f_P(1,\mathfrak{e}) = \mu$, $f_P(1,\mathfrak{s}) = \nu$ for $f_P = \delta_P(H)$}. 
In this sense, it holds $\HONEY_n \cong \HIVE_{n,1}(\emptyset)$ and we regard honeycombs as hives as well.
\new{Another example\BC{kg2}{kg} is illustrated in \cref{fig:hive_example1}, where $\sim_S$ is generated by $(1,\mathfrak{s}) \sim_S (2,\mathfrak{w})$
and $(2,\mathfrak{s}) \sim_S (3,\mathfrak{w})$, such that the singeltons are $P = \{ (1,\mathfrak{w}), (1,\mathfrak{e}), (2,\mathfrak{e}), (3,\mathfrak{e}), (3,\mathfrak{s}) \}$.}

\begin{lemma}[Eigenvalues of a sums of matrices]\label{lem:sevherm}
The relation \\ $a^{(1)} \boxplus \ldots \boxplus a^{(m)} \sim_c c$ is satisfied if and only if
there exists a hive $H$ of size $M = m-1$ (cf. \cref{fig:hive_example1}) with structure $\sim_S$, generated by $(i,\mathfrak{s}) \sim_S (i+1,\mathfrak{w})$, $i = 1,\ldots,M-1$,
and $\delta_P(H) = \{(1,\mathfrak{w})\mapsto a^{(1)},(1,\mathfrak{e}) \mapsto a^{(2)} ,(2,\mathfrak{e}) \mapsto a^{(3)},\ldots,(M,\mathfrak{e}) \mapsto a^{(m)},(M,\mathfrak{s}) \mapsto c\}$.
\end{lemma}
\begin{proof}
 ``$\Rightarrow$'': The relation $a^{(1)} \boxplus \ldots \boxplus a^{(m)} \sim_c c$ is equivalent to the existence of Hermitian (or real symmetric, cf. \cref{thm:fultonsym}) matrices $A^{(1)},\ldots,A^{(m)}$, $C = A^{(1)} + \ldots + A^{(m)}$ with eigenvalues $a^{(1)},\ldots,a^{(m)},c$, respectively.
For $A^{(1,\ldots,k+1)} := A^{(1,\ldots,k)} + A^{(k+1)}, \ k = 1,\ldots,m-1$, with accordant eigenvalues $a^{(1,\ldots,k)}$, the relation can equivalently be
restated as $a^{(1,\ldots,k)} \boxplus a^{(k+1)} \sim_c a^{(1,\ldots,k+1)}, \ k = 1,\ldots,m-1$. This in turn is equivalent to the existence of honeycombs $h^{(1)},\ldots,h^{(m-1)}$ with
boundary values $\delta(h^{(1)}) = (a^{(1)},a^{(2)},-a^{(1,2)}), \delta(h^{(2)}) = (a^{(1,2)},a^{(3)},-a^{(1,2,3)}), \ldots$, $\delta(h^{(m-1)}) = (a^{(1,\ldots,m-1)},a^{(m)},-c)$.
This depicts the structure $\sim_S$ and boundary function $\delta_P(H)$. \\
 ``$\Leftarrow$'': If in reverse the hive $H$ is assumed to exist, then we know, via the single honeycombs,
that there exist matrices $\widetilde{A}^{(1,\ldots,k+1)} = A^{(1,\ldots,k)} + A^{(k+1)}, \ k = 1,\ldots,m-1$ with corresponding eigenvalues.
Although we only know that $\widetilde{A}^{(1,\ldots,k+1)}$ and $A^{(1,\ldots,k+1)}$ share eigenvalues, the remaining, reverse construction
is done via an inductive diagonalization argument (cf. \cref{rem:diag}).
%
\end{proof}
\begin{figure}[tbhp]
  \centering
  \ifuseprecompiled
\includegraphics{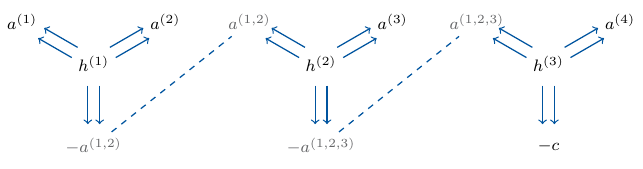}
\else
\setlength\figureheight{2cm}
\setlength\figurewidth{14cm}
\tikzsetnextfilename{hive_example1}
\input{tikz_base_files/hive_example1/hive_example1.tikz}
\fi
\caption{\label{fig:hive_example1} The schematic display of an $(n,3)$-hive $H$ with structure $\sim_S$ as in \cref{lem:sevherm}. 
North west, north east and south rays correspond to the boundary values $\mathfrak{w}(h_i)$, $\mathfrak{e}(h_i)$ and $\mathfrak{s}(h_i)$, respectively.
Coupled boundaries are in gray and connected by dashed lines.}
\end{figure}
The idea behind honeycomb overlays (cf. \cref{eq:overlay}) can be extended to hives as well:
\begin{lemma}[Zero eigenvalues]\label{lem:zeroeig}
If the relation \\ $a^{(1)} \boxplus \ldots \boxplus a^{(m)} \sim_c c$ is satisfied for $a^{(i)} \in \mathcal{D}_{\geq 0}^n$, $i = 1,\ldots,m$,
and $c_n = 0$, then $a^{(1)}_n = \ldots = a^{(m)}_n = 0$ and already $a^{(1)}|_{\{1,\ldots,n-1\}} \boxplus \ldots \boxplus a^{(m)}|_{\{1,\ldots,n-1\}} \sim_c c|_{\{1,\ldots,n-1\}}$.
\end{lemma}
\begin{proof}
The first statement follows by basic linear algebra, since $a^{(1)},\ldots,a^{(m)}$ are nonnegative. For the second part,
\cref{lem:sevherm} and \cref{eq:overlay} are used. Inductively, in each
honeycomb of the corresponding hive $H$, a separate $1$-honeycomb with boundary values $(0,0,0)$ can be found. Hence, each honeycomb is an overlay
of such a $1$-honeycomb and an $(n-1)$-honeycomb. All remaining $(n-1)$-honeycombs then form a new hive with identical structure $\sim_S$.
\end{proof}
We arrive at an extended version of \cref{thm:fundthm}.
\begin{theorem}[\new{Equivalence to existence of a hive}]\label{thm:relaxed}
Let $(\gamma,\theta) \in \mathcal{D}^{\infty}_{\geq 0} \times \mathcal{D}^{\infty}_{\geq 0}$ and $n \geq \degree(\gamma), \degree(\theta)$.
Further, let $\widetilde{\theta} = (\theta_+,0,\ldots,0)$, $\widetilde{\gamma} = (\gamma_+,0,\ldots,0)$ be $n$-tuples. 
The following statements are equivalent, independent of the choice $\K \in \{\R,\C\}$:\BC{q0}{q}
\begin{itemize}
 \item The pair $(\gamma,\theta)$ is feasible for $m \in \N$ 
 \item There are $m$ pairs of Hermitian, positive semi-definite matrices $(A^{(i)},B^{(i)}) \in \C^{n \times n} \times \C^{n \times n}$,
 each with identical (multiplicities of) eigenvalues, such that $A := \sum_{i=1}^{m} A^{(i)}$ has eigenvalues $\widetilde{\theta}^2$ and $B := \sum_{i=1}^{m} B^{(i)}$ 
 has eigenvalues $\widetilde{\gamma}^2$, respectively.
 \item There exist $a^{(1)},\ldots,a^{(m)} \in \D_{\geq 0}^n$ such that $a^{(1)} \boxplus \ldots \boxplus a^{(m)} \sim_c \widetilde{\gamma}^2$
 as well as $a^{(1)} \boxplus \ldots \boxplus a^{(m)} \sim_c \widetilde{\theta}^2$. 
 \item There exists a positive semi-definite $(n,M)$-hive $H$ of size $M = 2(m-1)$ (cf. \cref{fig:hive_example2}) with structure $\sim_S$,
where $(i + u,\new{\mathfrak{s}}) \sim_S (i+1 + u,\new{\mathfrak{w}})$, $i = 1,\ldots,M/2-1$, $u \in \{0,M/2\}$, as well as
$(1,\new{\mathfrak{w}}) \sim_S (1+M/2,\new{\mathfrak{w}})$ and $(i,\new{\mathfrak{e}}) \sim_S (i+M/2,\new{\mathfrak{e}})$, $i = 1,\ldots,M$. Further, 
$\delta_P(H) = \{(M/2,\new{\mathfrak{s}}) \mapsto \widetilde{\gamma}^2,(M,\new{\mathfrak{s}}) \mapsto \widetilde{\theta}^2\}$.
\end{itemize}
\end{theorem}
\begin{proof}
The existence of matrices with actual size $\degree(\gamma)$, $\degree(\theta)$, respectively, follows by repeated application of \cref{lem:zeroeig}.
The hive essentially consists of two rows of honeycombs as in \cref{lem:sevherm}.
Therefor, the same argumentation holds, but instead of prescribed boundary values $a^{(i)}$, these values
are coupled between the two hive parts. Due to \cref{thm:fultonsym}, there is no difference whether we consider real or complex matrices and tensors.
\end{proof}
\old{The irrelevance of zeros hence also translates into this setting.}
\begin{figure}[tbhp]
  \centering
  \ifuseprecompiled
\includegraphics{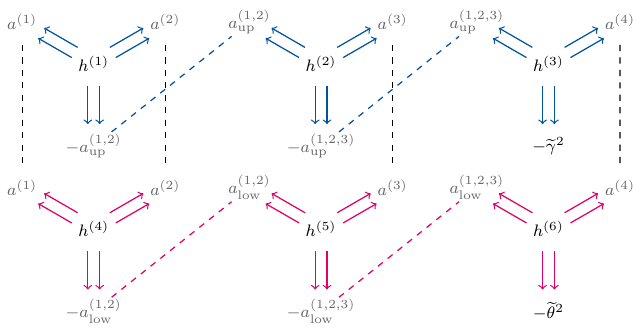}
\else
\setlength\figureheight{2cm}
\setlength\figurewidth{14cm}
\tikzsetnextfilename{hive_example2}
\input{tikz_base_files/hive_example2/hive_example2.tikz}
\fi
\caption{\label{fig:hive_example2} The schematic display of an $(n,6)$-hive $H$ (upper part in blue, lower part in magenta)
with structure $\sim_S$ as in \cref{lem:sevherm}. 
North west, north east and south rays correspond to the boundary values $\mathfrak{w}(h_i)$, $\mathfrak{e}(h_i)$ and $\mathfrak{s}(h_i)$, respectively.
Coupled boundaries are in gray and connected by dashed lines.}
\end{figure}
The feasibility of $(\gamma,\theta)$ as in \cref{eq:not_triv_pair} is 
provided by the hive in \cref{fig:plot_not_triv_feasible}. Even though not diagonally feasible,
the pair can be disassembled, as later shown in \cref{sec:vdescr}, into multiple, diagonally
feasible pairs, which then as well prove its feasibility.
\begin{figure}[tbhp]
  \centering
  \ifuseprecompiled
\includegraphics{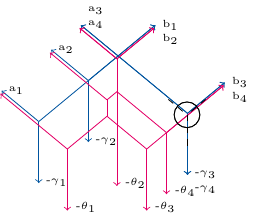}
\else
\setlength\figureheight{3cm}
\setlength\figurewidth{4cm}
\tikzsetnextfilename{plot_non_triv_feasible}
\input{tikz_base_files/plot_non_triv_feasible/plot_non_triv_feasible.tikz}
\fi
\caption{\label{fig:plot_not_triv_feasible} A $(4,2)$-hive consisting of two coupled honeycombs (blue for $\gamma$, magenta for $\theta$),
which are slightly shifted for better visibility, generated by \cref{alg:linprog}. Note that some lines have multiplicity $2$.
The coupled boundary values are given by $a = (4,1.5,0,0)$ and $b = (3.5,3.5,0,0)$. 
It proves the feasibility of the pair $(\gamma,\theta)$, $\widetilde{\gamma}^2 = (7.5,5,0,0)$, $\widetilde{\theta}^2 = (6,3.5,2,1)$ for $m = 2$,
since $\widetilde{\gamma}^2, \widetilde{\theta}^2 \sim_c a \boxplus b$ (the exponent $^2$ has been skipped for better readability).
Only due to the short, vertical line segment in the middle, the hive does not provide diagonal feasibility.} 
\end{figure}
As another example serves $\gamma_+^2 = (10,2,1,0.25,0.25)$ and $\theta_+^2 = (4,3,2.5,2,2)$.\BC{kh1}{kh} According to
\eqref{eq:n2leqn1}, the pair $(\gamma,\theta)$ is not feasible for $m = 2,3$, but may be feasible for $m = 4$. 
The hive in \cref{fig:plot_triv_feasible_single,fig:plot_triv_feasible} (having been constructed with \cref{alg:linprog}) provides that this is indeed the case.
We further know that the pair is diagonally feasible for $m = 5$ (due the constructive \cref{thm:prop1}). 
\begin{figure}[tbhp]
  \centering
  \ifuseprecompiled
\includegraphics{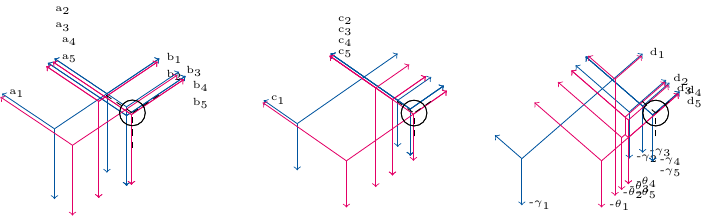}
\else
\setlength\figureheight{2.5cm}
\setlength\figurewidth{12cm}
\tikzsetnextfilename{plot_triv_feasible_n5m4_single}
\input{tikz_base_files/plot_triv_feasible/plot_triv_feasible_n5m4_single.tikz} 
\fi
\caption{\label{fig:plot_triv_feasible_single}  A $(5,4)$-hive consisting of six coupled honeycombs (blue for $\gamma$, magenta for $\theta$),
which are slightly shifted for better visibility, generated by \cref{alg:linprog}. Note that some lines have multiplicity larger than $1$.
Also, in each second pair of honeycombs, the roles of boundaries $\lambda$ and $\mu$ have been switched (which we can do due to the symmetry regarding $\boxplus$), such that the honeycombs can be
combined to a single diagram as in \cref{fig:plot_triv_feasible}. This means that the south rays of an odd numbered pair are always connected
to the north-east (instead of north-west) rays of the consecutive pair.
The boundary values are given by \new{$a = (2,0.25,0.25,0,0)$, $b = (1,1,0.25,0,0)$, $c = (4,0,0,0,0)$ and $d = (3,1,0.75,0,0)$.
It proves the feasibility of the pair $(\gamma,\theta)$, $\widetilde{\gamma}^2 = (10,2,1,0.25,0.25)$, $\widetilde{\theta}^2 = (4,3,2.5,2,2)$ }for $m = 4$,
since both $\widetilde{\gamma}^2, \widetilde{\theta}^2 \sim_c a \boxplus b \boxplus c \boxplus d$ (the exponent $^2$ has been skipped for better readability).}
\end{figure}

\begin{figure}[tbhp]
  \centering
  \ifuseprecompiled
\includegraphics{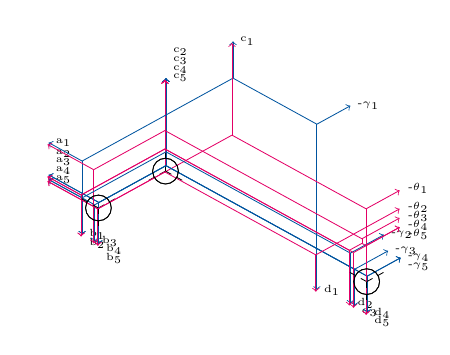}
\else
\setlength\figureheight{6cm}
\setlength\figurewidth{8cm}
\tikzsetnextfilename{plot_triv_feasible_n5m4}
\input{tikz_base_files/plot_triv_feasible/plot_triv_feasible_n5m4.tikz} 
\fi
\caption{\label{fig:plot_triv_feasible} The three overlayed honeycomb pairs in \cref{fig:plot_triv_feasible_single} put together 
with respect to their coupling (the exponent $^2$ for $\gamma,\theta$ has been skipped for better readability).}
\end{figure}

\subsection{Hives are polyhedral cones}\label{sec:cones}

As previously done for honeycombs, we also associate hives with certain vector spaces.

\begin{definition}[Hive sets and edge image]
\new{Let $H$ be an $(n,M)$-hive consisting of honeycombs $h^{(1)},\ldots,h^{(M)}$.
We define}
\[\EDGE(H) = (\EDGE(h^{(1)}),\ldots,\EDGE(h^{(M)})) \in \mathbb{R}^{N \times M} \]
as the collection of constant coordinates of all edges appearing in the honeycombs within the hive $H$.
Although defined via the abstract set $B$ (in \cref{def:hive}), we let
$\sim_S$ act on the related edge coordinates as well.
For $H \in \HIVE_{n,M}(\sim_S)$, we then define the edge image as $\EDGE_S(H) \in \bigslant{\mathbb{R}^{N \times M}}{\sim_S} \cong \mathbb{R}^{N^{\ast}}$, in which coupled boundaries
are assigned the same coordinate. 
\end{definition}

\begin{theorem}[Hive sets are described by polyhedral cones]\label{lem:L1L2}
\hspace*{0cm}
\begin{itemize}
 \item The hive set $\HIVE_{n,M}(\sim_S)$, is a closed, convex, polyhedral cone, i.e. there exist matrices $L_1,L_2$ s.t.
$\EDGE_S(\HIVE_{n,M}(\sim_S)) = \{ x \mid L_1 x \leq 0, \ L_2 x = 0 \}$.
 \item Each fiber of $\delta_P$ (i.e. a set of hives with structure $\sim_S$ and boundary \old{function }$f_P$), forms a closed, convex polyhedron,
 i.e. there exist matrices $L_1,L_2,L_3$ and a vector $b$ s.t.
$\EDGE_S(\delta_P^{-1}(f_P)) = \{ x \mid L_1 x \leq 0, \ L_2 x = 0, \ L_3 x = b \}$.
\end{itemize}
\end{theorem}
\begin{proof}
Each honeycomb of a hive follows its linear constraints. The hive structure and identification of coordinates as one and the same by $\sim_S$
only imposes additional linear constraints. The rest is elementary geometry.
\end{proof}
\begin{corollary}\label{cor:maincor}
The boundary set 
\[\BDRY_{n,M}(\sim_S) := \{ \mathrm{image}(f_P) \in (\mathcal{D}^n)^{P} \mid f_P = \delta_P(H),\ H \in \HIVE_{n,M}(\sim_S)\}\]
forms a closed, convex, polyhedral cone. 
This hence also holds for any intersection with, or projection to a
lower dimensional subspace. 
\end{corollary}
\begin{proof}
The boundary set is given by the projection of $\EDGE_S(\HIVE_{n,M}(\sim_S))$ to the subset of coordinates associated to the ones in $P$.
The proof is finished, since projections to fewer coordinates of closed, convex, polyhedral cones are again such cones.
The same holds for intersections with subspaces.
\end{proof}

\section{\new{Cones of squared feasible values}}\label{sec:coneofsfv}

\new{The following fact has already been established in \cite{DaHa05_Qua}, but also follows from the previous \cref{cor:maincor}. \BC{z4}{z}
\begin{corollary}[Squared feasible pairs form cones]\label{cor:sqfcone}
 Let $m,r_1,r_2 \in \N$. The set of squared feasible pairs $\mathcal{F}^2_{m,(r_1,r_2)}$ (cf. \cref{def:setoffeaspairs}) is a closed, convex, polyhedral cone, embedded into $\R^{r_1 + r_2}$.
 If $r_1 \leq m r_2$ and $r_2 \leq m r_1$, then its dimension is $r_1 + r_2 - 1$.
 Otherwise, $\mathcal{F}^2_{n,(r_1,r_2)} \cap \D^{r_1}_{>0} \times \D^{r_2}_{>0}$ is empty.
\end{corollary}
\begin{proof}
By \cref{cor:maincor} and \cref{thm:relaxed} it directly follows that $\mathcal{F}^2_{m,(r_1,r_2)}$ is a closed, convex, polyhedral cone.
For the first case, it only remains to show that
the cone has dimension $r_1 + r_2 - 1$, or equivalently, it contains as many linearly independent vectors. These are however already
given by the examples carried out in \cref{lem:trivial}. 
From \cref{lem:feasofapair}, it directly follows that if $(\gamma,\theta)$ is feasible for $m$, 
then it must hold $\degree(\gamma) \leq m \degree(\theta)$ and $\degree(\theta) \leq m \degree(\gamma)$, which provides the second case.
%
%
\end{proof}
The implication for the original TT-feasibility then is:
\begin{corollary}[Cone property for higher order tensors]\label{cor:Pyt}\BC{w1}{w}
For $d \in \N$, let both $\sigma, \tau \in (\D^{\infty}_{\geq 0})^{d-1}$ be feasible
for $n \in \N^d$ (in the sense of \cref{def:deffeasibility}). Then $\upsilon$, $(\upsilon^{(\mu)})^2 := (\sigma^{(\mu)})^2 + (\tau^{(\mu)})^2$, $\mu = 1,\ldots,d-1$,
is feasible for $n$ as well.
\end{corollary}
More general, squared feasible TT-singular values form a closed, convex, polyhedral cone. Its H-description
is the collection of linear constraints for the pairs $(\sigma^{(\mu-1)},\sigma^{(\mu)})$.
\begin{proof}
 Due to \cref{cor:pairfeas}, it only remains to show that each pair $(\upsilon^{(\mu-1)},\upsilon^{(\mu)})$ is feasible for $n_\mu$, $\mu = 1,\ldots,d$.
 For each single $\mu$, this follows directly from \cref{cor:sqfcone}.%
\end{proof}
}%
\subsection{\new{Necessary inequalities}}\label{sec:applhives}
%
While for each specific $m$ and $r_1$, the results in \cite{DaHa05_Qua} \BC{z3}{z}%
allow to calculate the $H$-description of the cone $\mathcal{F}^2_{m,(r_1,m r_1)}$ (i.e. a set of necessary and sufficient inequalities),
we will concern ourselves with possibly weaker, but generalized statements for arbitrary $m$ in this section.
In the subsequent \cref{sec:vdescr}, we will derive a $V$-description of $\mathcal{F}^2_{m,(m,m^2)}$ (i.e. a set of generating
vertices).
\begin{lemma}\label{lem:lem35analog}\BC{ki1}{ki}
For $n,m \in \N$, let $T^{(j)}$, $I^{(j)} \subset \{1,\ldots,n\}$ be sets of equal cardinality, $j = 1,\ldots,m$,
with $T^{(1)} = I^{(1)}$ and $\Delta T^{(j)} \sim_c \Delta T^{(j-1)} \boxplus \Delta I^{(j)}$ (cf. \cref{thm:desTrn}) for $j = 2,\ldots,m$.
Then provided $\zeta \sim_c a^{(1)} \boxplus \ldots \boxplus a^{(m)}$, the inequality
\begin{align} \label{eq:necineq}
 \sum_{i \in T^{(m)}} \zeta_i & \leq \sum_{j = 1}^m \sum_{i \in I^{(j)}} a^{(j)}_{i}
\end{align}
holds true, for every $a^{(j)}, \zeta \in \mathcal{D}^n$, $j = 1,\ldots,m$.
If \cref{eq:necineq} holds as equality, then already $\zeta|_{T^{(m)}} \sim_c a^{(1)}|_{I^{(1)}} \boxplus \ldots \boxplus a^{(m)}|_{I^{(m)}}$
and $\zeta|_{(T^{(m)})^c} \sim_c a^{(1)}|_{(I^{(1)})^c} \boxplus \ldots \boxplus a^{(m)}|_{(I^{(m)})^c}$. (cf. \cref{lem:reduc}).
\end{lemma}
\begin{proof}
 The statement \cref{eq:necineq} follows inductively, if for each $j = 2,\ldots,m$,
\begin{align}\label{eq:induc1}
\sum_{i \in T^{(j)}} \nu_{i} & \leq \sum_{i \in T^{(j-1)}} \lambda_{i} + \sum_{i \in I^{(j)}} \mu_{i}
\end{align}
is true whenever $\nu \sim_c \lambda \boxplus \mu$. By \cref{thm:desTrn}, this holds since by assumption
$\Delta T^{(j)} \sim_c \Delta T^{(j-1)} \boxplus \Delta I^{(j)}$ for $j = 2,\ldots,m$. If \cref{eq:necineq} holds as equality,
then all single inequalities \cref{eq:induc1} must hold as equality, and hence \cref{lem:reduc} can be applied inductively as well.
\end{proof}
\begin{theorem}\label{cor:feasineq}
 In the situation of \cref{lem:lem35analog}, let $\widehat{T}$ and $\widehat{I}$ fulfill the same assumptions as $T$ and $I$.
 Let further $I^{(j)} \cap \widehat{I}^{(j)} = \emptyset$, $j = 1,\ldots,m$.
 If the pair $(\gamma,\theta) \in \mathcal{D}^{\infty}_{\geq 0} \times \mathcal{D}^{\infty}_{\geq 0}$ is feasible for $m$, then 
 \begin{align}\label{eq:feasineq}
  \sum_{i \in T^{(m)}} \gamma^2_{i} \leq \sum_{i \in \{1,\ldots,\degree(\theta)\} \setminus \widehat{T}^{(m)}} \theta^2_{i}
 \end{align}
 must hold true. If \cref{eq:feasineq} holds as equality, then $((\gamma|_{T^{(m)}},0,\ldots), \ (\theta|_{(\widehat{T}^{(m)})^c},0,\ldots))$
 and $((\gamma|_{(T^{(m)})^c},0,\ldots), \ (\theta|_{\widehat{T}^{(m)}},0,\ldots))$ are already feasible.
\end{theorem}
Together with \cref{eq:overlay} this also implies that the corresponding hive
is an overlay of two smaller hives modulo zero boundaries.
\begin{proof}
Let $n \geq \max(T^{(m)}),\degree(\gamma),\degree(\theta)$.
As $(\gamma,\theta)$ is feasible, due to \cref{lem:lem35analog}, the inequality \cref{eq:necineq} holds for some joint eigenvalues $a^{(1)},\ldots,a^{(m)} \in \mathcal{D}^n_{\geq 0}$
for both $\zeta := \widetilde{\gamma}^2 = (\gamma_1^2,\ldots,\gamma_n^2)$, $T$, $I$ and $\widehat{\zeta} = \widetilde{\theta}^2 := (\theta_1^2,\ldots,\theta_n^2)$, $\widehat{T}$, $\widehat{I}$.
Furthermore, we have 
$\sum_{i = 1}^{n} \theta_i^2 = \sum_{i = 1}^{n} a^{(1)}_i + \ldots + \sum_{i = 1}^{n} a^{(m)}_i$.
Subtracting \cref{eq:necineq} for $\widehat{\zeta}$ from this equality yields
\begin{align}
\sum_{i \notin \widehat{T}^{(m)}} \theta^2_i \overset{n \geq \degree(\theta)}{=} \sum_{i \in \{1,\ldots,n\} \setminus \widehat{T}^{(m)}} \theta^2_i & \overset{\cref{eq:necineq} \mbox{ for } \widehat{\zeta}}{\geq}
\sum_{j = 1}^m \sum_{i \in \{1,\ldots,n\} \setminus \widehat{I}^{(j)}} a^{(j)}_{i} \\
& \overset{a_i^{(j)} \geq 0}{\geq} \sum_{j = 1}^m \sum_{i \in I^{(j)}} a^{(j)}_{i} 
\overset{\cref{eq:necineq} \mbox{ for } \zeta}{\geq} \sum_{i \in T^{(m)}} \gamma^2_{i}.
\label{eq:lasteqinsetineq1}
\end{align}
This finishes the first part.
In case of an equality, since the second ``$\geq$'' must hold as equality, we have $a^{(j)}|_{\{1,\ldots,n\} \setminus \widehat{I}^{(j)}} = (a^{(j)}|_{I^{(j)}},0,\ldots)$ and
$a^{(j)}|_{\{1,\ldots,n\} \setminus I^{(j)}} = (a^{(j)}|_{\widehat{I}^{(j)}},0,\ldots)$ for each $j = 1,\ldots,m$.
Furthermore, the first and third ``$\geq$'' in \cref{eq:lasteqinsetineq1} must hold as equality as well. Hence,
the latter statement in \cref{lem:lem35analog} can be applied to the inequalities \cref{eq:necineq} for both $\zeta$ and $\widehat{\zeta}$, such that we can conclude the latter statement in this corollary.
\end{proof}
\begin{corollary}[A set of inequalities for feasible pairs]\label{prop:setofineq}
Let $p^{(1)} \ \dot{\cup} \ p^{(2)} = \mathbb{N}$ be two disjoint sets, with $p^{(1)}$ finite of size $r$.
If $(\gamma,\theta) \in \mathcal{D}^{\infty}_{\geq 0} \times \mathcal{D}^{\infty}_{\geq 0}$ is feasible for $m \in \mathbb{N}$, then it holds
($p^{(u)}_i$ being the $i$-th smallest element)
\begin{align*}
 \sum_{i \in P_m^{(1)}} \gamma^2_{i} \leq \sum_{i \notin P_m^{(2)}} \theta^2_i, \quad P_m^{(u)} := \{ m(p^{(u)}_i - i) + i \mid i = 1,2,\ldots \}, \ u = 1,2.
\end{align*}
\end{corollary}
\begin{proof}
 Let $n \geq \max(P_m^{(1)}),\degree(\gamma),\degree(\theta)$. Let further $\widetilde{P}_j^{(2)}$ contain the
$\widehat{k}$ smallest elements of $P_j^{(2)}$, where $\widehat{k}$ is the number of elements in $P_m^{(2)} \cap \{1,\ldots,n\}$, and let $\widetilde{P}_j^{(1)} = P_j^{(1)}$, $j = 1,\ldots,m$.
Thereby $\widetilde{P}_1^{(1)} = p^{(1)} = P_1^{(1)}$ and $\widetilde{P}_1^{(2)} \subset p^{(2)} = P_1^{(2)}$.
We have the following (diagonal) matrix identities
\begin{align*}
 \diag(\widetilde{P}_j^{(u)}) - \diag(1,\ldots,\ell) & = \diag(\widetilde{P}_{j-1}^{(u)}) + \diag(\widetilde{P}_{1}^{(u)}) - 2\diag(1,\ldots,\ell) \\
 \Leftrightarrow j(p^{(u)}_i - i) & = (j-1)(p^{(u)}_i - i) + (p^{(u)}_i - i), \quad i = 1,\ldots,\ell,\ \ell = |\widetilde{P}_j^{(u)}|
\end{align*}
where the diagonal elements are placed in ascending order. Hence, $\bigtriangleup \widetilde{P}_j^{(u)} \sim_c \bigtriangleup \widetilde{P}_{j-1}^{(u)} \boxplus \bigtriangleup \widetilde{P}_1^{(u)}$
for $j = 2,\ldots,m$, $u \in \{1,2\}$.
For $T^{(j)} := \widetilde{P}_j^{(1)}$, $I^{(j)} := \widetilde{P}_1^{(1)}$ and $\widehat{T}^{(j)} := \widetilde{P}_j^{(2)}$, $\widehat{I}^{(j)} := \widetilde{P}_1^{(2)}$,
we can apply \cref{cor:feasineq} to obtain the desired statement.
\end{proof}
Among the various inequalities contained in \cref{prop:setofineq},\BC{f1_}{f_}
the following two correspond to early mentioned inequalities for Weyl's problem.
The first case is \cref{eq:n2leqn1} and is also referred to as the \textit{basic inequalities} in \cite{DaHa05_Qua}.

\begin{corollary}[Ky Fan analogue for feas. pairs]\label{cor:kyfan}
The choice $a^{(1)} = \{1,\ldots,r\}$ in \cref{prop:setofineq} yields the inequality
 $\sum_{i = 1}^r \gamma_i^2 \leq \sum_{i = 1}^{mr} \theta_i^2.$
\end{corollary}

\begin{corollary}[Weyl analogue for feas. pairs]\label{cor:prop2}
The choice $a^{(1)} = \{r+1\}$ in \cref{prop:setofineq} yields the inequality
$ \gamma_{rm+1}^2 \leq \sum_{i = r+1}^{r+m} \theta_i^2.$
\end{corollary}

The QMP article \cite{DaHa05_Qua} explicitly provides the derivation for the case $\degree(\gamma) \leq 3$ and $m = 2$. \BC{z1}{z}
Thereby, the necessary (and sufficient) inequalities for the feasibility of $(\gamma,\theta)$, apart from the trace property, are as follows:
\cref{cor:kyfan} for $r = 1,2$; \cref{cor:prop2} for $r = 1$ and $\gamma^2_2 + \gamma^2_3 \leq \theta^2_1 + \theta^2_2 + \theta^2_3 + \theta^2_6$.
The last inequality is not included in \cref{prop:setofineq}, but can be derived from \cref{cor:feasineq} and be generalized in different ways. For example,
for $I^{(1)} = I^{(2)} = \{1,3\}$, $T^{(2)} = \{2,3\}$, $\widehat{I}^{(1)} = \widehat{I}^{(2)} = \{2,4,5,6,\ldots\}$, $\widehat{T}^{(2)} = \{4,5,7,8,\ldots\}$ and $I^{(j)} = \{1,2\}$, $T^{(j)} = \{2,3\}$, $\widehat{I}^{(j)} = \{3,4,5,6,\ldots\}$, $\widehat{T}^{(j)} = \{2j,2j+1,2j+3,2j+4,\ldots\}$, $j = 3,\ldots,m$,
(where we add the same amount of arbitrarily many consecutive numbers in $\widehat{I}^{(j)}$ and $\widehat{T}^{(j)}$)
one can conclude that
 $\gamma^2_2 + \gamma^2_3 \leq \sum_{i = 1}^{2m-1} \theta^2_i + \theta^2_{2m+2}$ whenever $(\gamma,\theta)$ is feasible for $m$.
\cref{cor:feasineq} does however not provide when this generalized inequality is redundant to other necessary ones.\\\\
The right sum in \cref{prop:setofineq} has always $m$-times as many summands as the left sum. 
For these inequalities, it further holds $\sum_{i \notin P_m^{(2)}} i - \sum_{i \in P_m^{(1)}} i = \sum_{i = k+1}^{mk} i = \frac{k(m-1)((m+1)k +1)}{2}$, 
where $k = |P_m^{(1)}|$. We can however only conjecture that this holds in general for every inequality in the
$H$-description of $\mathcal{F}^2_{m,(r_1,mr_1)}$.

\subsection{\new{Vertex description of \texorpdfstring{$\mathcal{F}^2_{m,(m,m^2)}$}{Fmmm2}}}\label{sec:vdescr}\BC{ka2}{ka}%
\def\ones#1#2{{#1}_{\# #2}}%
We revisit the special case \cref{eq:n2leqn1} and derive the vertex description of the corresponding cone $\mathcal{F}^2_{m,(m,m^2)}$ (cf. \cref{def:setoffeaspairs}).
In this section, for $a,b \in \N \cup \{0\}$, let therefor $(\ones{a}{b}) = (a,\ldots,a) \in \mathcal{D}^b_{\geq 0}$ (length $b$).
\begin{lemma}\label{lem:abfeas}
 Let $\alpha, \beta, m \in \N$, $\beta \leq m$, $\alpha \leq \beta m$, $\gamma_+^2 = (\ones{\alpha}{\beta})$ and $\theta_+^2 = (\ones{\beta}{\alpha})$.
 Then $(\gamma,\theta)$ is feasible for $m$. 
\end{lemma}
\begin{proof}
We prove by induction over $m$.
Without loss of generality, we may assume $\alpha > \beta$ by which $\alpha = k \beta + t$ for unique natural numbers $k < m, t < \beta$.
Considering \cref{rem:diag}, it suffices to show that for
$\widetilde{\gamma}^2 := \gamma_+^2 - (t,\ones{\beta}{\beta-1}) = (k\beta,\ones{(\alpha-\beta)}{\beta-1})$ and $\widetilde{\theta}^2 := \theta_+^2 - (0,\ldots,0,t,\ones{\beta}{\beta-1}) = (\ones{\beta}{\alpha-\beta},\beta-t,\ones{0}{\beta-1})$ 
the pair $((\widetilde{\gamma},0,\ldots), (\widetilde{\theta},0,\ldots))$ is feasible for $m-1$.
In order to show this, we split $\widetilde{\gamma} = (\widetilde{\gamma}_{(1)},\widetilde{\gamma}_{(2)})$, $\widetilde{\theta} = (\widetilde{\theta}_{(1)},\widetilde{\theta}_{(2)})$ into two pairs
$\widetilde{\gamma}^2_{(1)} := (k\beta)$, $\widetilde{\gamma}^2_{(2)} := (\ones{\beta}{k})$ and $\widetilde{\theta}^2_{(1)} := (\ones{(\alpha-\beta)}{\beta-1},0,\ldots,0)$, $\widetilde{\theta}^2_{(2)} := (\ones{\beta}{v},\beta-t,0,\ldots,0)$ with $v = \alpha-\beta-k = (k-1)(\beta-1) + (t-1)$.
We can then, considering overlays of honeycombs, treat both pairs independently.
While $((\widetilde{\gamma}_{(1)},0,\ldots), (\widetilde{\theta}_{(1)},0,\ldots))$ is feasible for $k \leq m-1$, in the second case,
$(\widetilde{\gamma}^2_{(2)},\widetilde{\theta}^2_{(2)})$ is a convex combination of $(\ones{(v+1)}{\beta-1})$, $(\ones{(\beta-1)}{v+1})$
and $(\ones{v}{\beta-1})$, $(\ones{(\beta-1)}{v})$. Since $\beta-1 \leq m-1$ and $v \leq v+1 \leq (m-1)(\beta-1)$, the proof is finished by induction.
\end{proof}
%
The following theorem has priorly been conjectured by \cite{DaHa05_Qua} and proven by \cite{LiPoWa14_Ran}. \BC{z2}{z}%
We prove it in a way which allows to identify all vertices as in \cref{completevert}.
\begin{theorem}\label{thm:restatement}
 Let $(\gamma,\theta) \in \mathcal{D}^{\infty}_{\geq 0} \times \mathcal{D}^{\infty}_{\geq 0}$ and $m \in \N$. 
 If $\degree(\gamma) \leq m$ and if all Ky Fan inequalities (\cref{cor:kyfan}) as well as the trace property $\|\gamma\|_2 = \|\theta\|_2$ hold, then the pair is feasible for $m$.
\end{theorem}
\begin{proof}
Here, we denote the Ky Fan inequality (\cref{cor:kyfan}) for $r$ with $K_r$, and in case of an equality we say $E_r$ holds.
 Due to $K_m$, $\degree(\gamma) \leq m$ and the trace property, $E_m$ and $\degree(\theta) \leq m \degree(\gamma)$ must be true.
 For fixed $m$, we prove by induction over $\degree(\gamma)+\degree(\theta)$. Let $0 \leq k < m$ be the largest number for which $E_k$ is fulfilled
 and let $\alpha = \degree(\theta) - mk$ as well as $\beta = \degree(\gamma) - k$. We define
 $(\widehat{\gamma}^2 \mid \widehat{\theta}^2) := (\gamma_+^2 \mid \theta_+^2) - f \cdot (\ones{m \beta}{k},\ \ones{\alpha}{\beta} \mid \ones{\beta}{m k},\ \ones{\beta}{\alpha})$, $f > 0$.
 Then $E_k$ and $K_j$, $j < k$, are true for $(\widehat{\gamma} \mid \widehat{\theta})$ for all $f > 0$.
 Further, as long as $K_{k+1}$ holds for $(\widehat{\gamma} \mid \widehat{\theta})$ (which it does for any $f > 0$ if $k = m-1$), then due to $K_{k-1}$ and $E_k$ it follows that $\widehat{\gamma}_{k+1} \leq \widehat{\gamma}_{k}$.
 Hence, $f$ can be chosen such that $K_i$, $i = 1,\ldots,m-1,$ and $(\widehat{\gamma} \mid \widehat{\theta}) \in \mathcal{D}_{\geq 0}^{\beta} \times \mathcal{D}_{\geq 0}^{\alpha}$ as well as either $(i)$ $E_{j}$ for at least one $j$, $k < j < m$, or $(ii)$ $\widehat{\gamma}_\beta = 0 \vee \widehat{\theta}_\alpha = 0$.
 In case of $(i)$, we can repeat the above construction for increased $k$ until $k = m-1$ and hence $(ii)$ remains the sole option. In that case, we are finished by induction.
\end{proof}
%
%
%
\begin{corollary}\label{completevert}
 A complete vertex description of $\mathcal{F}_{m,(m,m^2)}$ is given by 
 \begin{align*}
\mathcal{V} = \{ & (\widetilde{\gamma}, \widetilde{\theta}) \in \mathcal{F}_{m,(m,m^2)} \mid \widetilde{\gamma}_+^2 = (\ones{m \beta}{k},\ \ones{\alpha}{\beta}), \ \widetilde{\theta}_+^2 = (\ones{\beta}{m k},\ \ones{\beta}{\alpha}), \\ 
 & \quad k \in \{0,\ldots,m - \beta\},\ \alpha, \beta \in \N,\ \beta \leq m, \ \alpha \leq \beta m; \ \mbox{and } k = 0 \mbox{ if } \alpha = \beta m \}
 \end{align*}
\end{corollary}
 A short calculation shows that the number of vertices $|\mathcal{V}|$ is given by a polynomial with leading monomial $m^4/6$.
\begin{proof}
The proof of \cref{thm:restatement} is constructive and decomposes a squared feasible pair into a convex
combination of squared feasible pairs in $\mathcal{V}$. It hence remains to show that the elements of $\mathcal{V}$ are vertices. 
Given any two elements $v = v(k_1,\alpha_1,\beta_1)$, $w = w(k_2,\alpha_2,\beta_2)$, $v^2,w^2 \in \mathcal{V}$, let $y^2_f = v^2 - f \cdot w^2$, $f > 0$.
For $y_f \in \mathcal{D}^m_{\geq 0} \times \mathcal{D}^{m^2}_{\geq 0}$ to be true, we must have $m k_1 + \alpha_1 = m k_2 + \alpha_2$ as well as either $(i)$ $k_1 = k_2$
and $\beta_1 = \beta_2$ or $(ii)$ $k_2 = 0$ and $\beta_1 + k_1 = \beta_2$. In the second case, $y_f$ would violate $K_{k_1}$ if $k_1 \neq 0$.
If $y^2_f$ is again a convex combination of elements in $\mathcal{V}$, $y_f$ must be feasible. Due to the above, it then however follows that $v = w$, $y^2 = (1-f) v^2$.
In other words, $v^2$ can not be a convex combination of other elements in $\mathcal{V}$.
%
%
%
\end{proof}
For example, all $7$ vertices $v^2_1,\ldots,v^2_7$ of $\mathcal{F}^2_{2,(2,4)} \ (m = 2)$ are given through 
\begin{align*}
\left[\begin{smallmatrix}
 1 & 1 \\
 0 & 0 \\
 & 0 \\
 & 0 
\end{smallmatrix}\right],
\left[\begin{smallmatrix}
 2 & 1 \\
 0 & 1 \\
 & 0 \\
 & 0 
\end{smallmatrix}\right],
\left[\begin{smallmatrix}
 1 & 2 \\
 1 & 0 \\
 & 0 \\
 & 0 
\end{smallmatrix}\right],
\left[\begin{smallmatrix}
 1 & 1 \\
 1 & 1 \\
 & 0 \\
 & 0 
\end{smallmatrix}\right],
\left[\begin{smallmatrix}
 2 & 1 \\
 1 & 1 \\
 & 1 \\
 & 0 
\end{smallmatrix}\right],
\left[\begin{smallmatrix}
 3 & 2 \\
 3 & 2 \\
 & 2 \\
 & 0 
\end{smallmatrix}\right] \mbox{and}
\left[\begin{smallmatrix}
 2 & 1 \\
 2 & 1 \\
 & 1\\
 & 1 
\end{smallmatrix}\right].
\end{align*}
For $m = 3$, we already have $27$ vertices. Although all these vertices happen to be diagonally feasible, this
is not the case in general. For example, $(\ones{5}{3} \mid \ones{3}{5},\ones{0}{4}) \in \mathcal{F}^2_{3,(3,9)}$ 
is a vertex, but it is easy to show that it is not diagonally feasible.
For $(\gamma,\theta)$ as in \cref{eq:not_triv_pair}, $\gamma_+^2 = (7.5,5)$, $\theta_+^2 = (6,3.5,2,1)$, we have $(\gamma_+,\theta_+) = 1.5 v^3_1 + 0.5 v^3_2 + 1.5 v^2_4 + v^2_5 + v^2_7$.\BC{kj1}{kj}

\section{\new{TFP }algorithms}\label{sec:algo}
Matlab implementations of algorithms mentioned in this work can be found under the name {\tt TT-feasibility-toolbox}
or directly at \begin{center} \url{https://git.rwth-aachen.de/sebastian.kraemer1/TT-feasibility-toolbox}. \end{center}
The description in \cref{lem:L1L2} yields the straightforward \cref{alg:linprog} to determine the minimal
value $\new{m}$ for which some pair $(\gamma,\theta) \in \D^{\infty}_{\geq 0} \times \D^{\infty}_{\geq 0}$
is feasible. The summed up length of all (inner) edges is minimized, since then the algorithm tends
to return a hive from which diagonal feasibility can be read off \new{(cf. \cref{lem:trivial})}.\old{(that is, eigenvalues and permutations, from
which a core can be constructed)}
%
%
 \begin{algorithm}
  \caption{Linear programming check for feasibility \label{alg:linprog}}
  \begin{algorithmic}[1] 
  \REQUIRE{$(\widetilde{\gamma},\widetilde{\theta}) \in \D^{r}_{\geq 0} \times \D^{r}_{\geq 0}$ with $\|\widetilde{\gamma}\|_2 = \|\widetilde{\theta}\|_2$ for some $r \in \mathbb{N}$}
  \FOR{$\new{m} = 2\ldots$}
  \STATE{as in \cref{lem:L1L2}, set $L$ such that $\EDGE_S(\delta_P^{-1}(f_P)) = \{ x \mid L_1 x \leq 0, \ L_2 x = 0, \ L_3 x = b \}$ for 
  the hive $H$ as in \cref{thm:relaxed}}
  \STATE{use a linear programming algorithm to minimize $Fx$ subject to $x \in \EDGE_S(\delta_P^{-1}(f_P))$,
  where $F$ is the vector for which $Fx \in \R_{\geq 0}$ is the summed up length of all (inner) edges in $H$}
  \IF{no solution exists}
  \STATE{continue with $\new{m}+1$}
  \ELSE
    \RETURN{minimal number $\new{m} \in \mathbb{N}$ for which $(\gamma,\theta)$ is feasible and a corresponding
          $(r,2\new{(m-1)})$-hive $H$ with minimal total edge length}
  \ENDIF
  \ENDFOR
  \end{algorithmic}
\end{algorithm}
%
\mbox{\Cref{alg:linprog}} always terminates for at most $\new{m} = \max(\degree(\gamma),\degree(\theta))$ due to \cref{lem:trivial}.
In practice, a slightly different coupling of boundaries is used (cf. \cref{fig:plot_triv_feasible_single}), since then the entire hive can be 
visualized in $\R^2$. For that, it is required to rotate and mirror some of the honeycombs (cf. \cref{fig:plot_triv_feasible}).
Depending on the linear programming algorithm, the input may be too badly conditioned\BC{i1_}{i_} to
allow a verification with satisfying residual.
 \begin{algorithm}
  \caption{Heuristic check for numerical feasibility \label{alg:numfea}}
  \begin{algorithmic}[1]
  \REQUIRE{$(\gamma_+,\theta_+) \in \D^{r_1}_{> 0} \times \D^{r_2}_{> 0}$ for some $r_1,r_2 \in \mathbb{N}$
  and a natural number $\new{m}$ \\ (as well as $\mathrm{tol} > 0$, $\mathrm{iter_{max}} > 0$)}
  \STATE{initialize a core $H^{(1)}_1$ of length $\new{m}$ and size $(r_1,r_2)$ randomly}
  \STATE{set $\gamma^{(0)}, \theta^{(0)} \equiv 0$, $\mathrm{relres} = 1$ and $k = 0$}
  \WHILE{$\mathrm{relres} > \mathrm{tol}$ and $k \leq \mathrm{iter_{max}}$}
    \STATE{$k = k + 1$}
  \STATE{calculate the SVD and set $U_1 \Theta^{(k)} V_1^T = \lhb{H^{(k-1)}_1}$ }
  \STATE{set $H^{(k)}_2$ via $\lhb{H^{(k)}_2} = U_1 \Theta$}
  \STATE{calculate the SVD and set $U_2 \Gamma^{(k)} V_2^T = \rhb{H^{(k)}_2}$}
  \STATE{set $H^{(k)}_1$ via $\rhb{H^{(k)}_1} = \Gamma V_2^T$}
  \STATE{$\mathrm{relres} = \max\left(\max_{i=1,\ldots,r_1}(|\gamma^{(k)}_i/\gamma_i-1|),\max_{i=1,\ldots,r_2}(|\theta^{(k)}_i/\theta_i-1|)\right)$}
  \ENDWHILE
  \IF{$\mathrm{relerr} \leq \mathrm{tol}$}
    \RETURN{$H^{\ast} = \Gamma^{-1} \ H_1^{(k)} \ \Theta^{-1}$}
    \STATE{$(\gamma,\theta)$ is (numerically) feasible for $\new{m}$}
   \ELSE
    \STATE{$(\gamma,\theta)$ is \textit{likely} to not be feasible for $\new{m}$}
  \ENDIF
  \end{algorithmic}
\end{algorithm}
The simple and heuristic \cref{alg:numfea} can 
be more reliable. As we have seen, we can restrict ourselves to $\K = \R$ (cf. \cref{thm:fultonsym}).
Fixpoints of the iteration are \old{given by the cores }\new{cores $H \in (\R^{r_1 \times r_2})^{\{1,\ldots,m\}}$ }for which
$H \Theta^{-1}$ is left-orthonormal and $\Gamma^{-1} H$ is right-orthonormal.
Hence $H^{\ast} = \Gamma^{-1} \ H \ \Theta^{-1}$ is a core for which
 $\Gamma H^{\ast}$ is left-orthonormal and $H^{\ast} \Theta$ is right-orthonormal, as
 required by \cref{lem:feasofapair}. Furthermore, the iterates cannot
 diverge in the following sense:
 
 \begin{lemma}[Behavior of \cref{alg:numfea}]
 For every $k > 1$ it holds $\|\gamma^{(k)}-\gamma_+\|_2 \leq \|\theta^{(k)}-\theta_+\|_2$ as well as
 $\|\theta^{(k)}-\theta_+\|_2 \leq \|\gamma^{(k-1)}-\gamma_+\|_2$.
 \end{lemma}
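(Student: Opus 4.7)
The plan is to invoke Mirsky's singular value perturbation inequality,
\[
\|\sigma(M_1)-\sigma(M_2)\|_2 \;\leq\; \|M_1-M_2\|_F,
\]
on the two unfoldings, exploiting the fact that both share the Frobenius norm of the underlying core.  A naive application to the iterates $H^{(k-1)}_1,H^{(k-1)}_2$ of the algorithm only gives the common upper bound $\|H^{(k-1)}_1-H^{(k-1)}_2\|_F$ for both singular-value distances, and hence cannot order them.  The trick is to replace the algorithm's projection step by the (Frobenius-nearest) Procrustes-type projection, for which the tensor distance collapses exactly to the singular value distance.

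The central observation is an invariance property.  Replacing $\lhb{H^{(k)}_2}=U_1\Theta$ by $\lhb{\widetilde H^{(k)}_2}=U_1\Theta V_1^T$ amounts to the slice-wise right multiplication $\widetilde H^{(k)}_2(j) = H^{(k)}_2(j) V_1^T$.  Since $V_1^T V_1=I$, we have $\sum_j \widetilde H^{(k)}_2(j)\widetilde H^{(k)}_2(j)^T = \sum_j H^{(k)}_2(j)H^{(k)}_2(j)^T$, so $\rhb{\widetilde H^{(k)}_2}\rhb{\widetilde H^{(k)}_2}^T$ has the same spectrum and therefore $\sigma(\rhb{\widetilde H^{(k)}_2})=\gamma^{(k)}$.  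Symmetrically, replacing $\rhb{H^{(k-1)}_1}=\Gamma V_2^T$ by $\rhb{\widetilde H^{(k-1)}_1}=U_2\Gamma V_2^T$ is a slice-wise left multiplication by $U_2$, and, by the same Gram-matrix argument, leaves $\sigma(\lhb{\widetilde H^{(k-1)}_1})=\theta^{(k)}$ untouched.

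With these substitutions the Frobenius distances simplify exactly:
\begin{align*}
\|\widetilde H^{(k)}_2 - H^{(k-1)}_1\|_F &= \|U_1(\Theta-\Theta^{(k)})V_1^T\|_F = \|\theta_+-\theta^{(k)}\|_2,\\
\|\widetilde H^{(k-1)}_1 - H^{(k-1)}_2\|_F &= \|U_2(\Gamma-\Gamma^{(k-1)})V_2^T\|_F = \|\gamma_+-\gamma^{(k-1)}\|_2.
\end{align*}
Now Mirsky applied to $\rhb{\widetilde H^{(k)}_2}$ versus $\rhb{H^{(k-1)}_1}$, combined with $\sigma(\rhb{\widetilde H^{(k)}_2})=\gamma^{(k)}$ and $\sigma(\rhb{H^{(k-1)}_1})=\gamma_+$, yields the first bound $\|\gamma^{(k)}-\gamma_+\|_2 \leq \|\theta^{(k)}-\theta_+\|_2$.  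Applied to $\lhb{\widetilde H^{(k-1)}_1}$ versus $\lhb{H^{(k-1)}_2}$, using $\sigma(\lhb{\widetilde H^{(k-1)}_1})=\theta^{(k)}$ and $\sigma(\lhb{H^{(k-1)}_2})=\theta_+$, it yields the second bound $\|\theta^{(k)}-\theta_+\|_2 \leq \|\gamma^{(k-1)}-\gamma_+\|_2$.

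The step I expect to require the most care is the invariance verification: checking that absorbing the orthogonal factor $V_1^T$ (respectively $U_2$) into the auxiliary core genuinely leaves the singular value being tested unchanged, so that Mirsky applied to the auxiliary core really bounds the \emph{algorithm's} $\gamma^{(k)}$ and $\theta^{(k)}$.  Once this is in hand the two inequalities are essentially immediate from Mirsky, because by construction the auxiliary cores sit at Frobenius distance exactly $\|\theta_+-\theta^{(k)}\|_2$ and $\|\gamma_+-\gamma^{(k-1)}\|_2$ from the relevant iterate.
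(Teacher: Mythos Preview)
Your argument is essentially the same as the paper's: you introduce an auxiliary core (your $\widetilde H^{(k)}_2$, the paper's $B=U_1\Theta V_1^T$) whose Frobenius distance to $H^{(k-1)}_1$ collapses exactly to $\|\theta^{(k)}-\theta_+\|_2$, verify via the Gram-matrix identity that the orthogonal factor does not change the singular values of the other unfolding, and then apply Mirsky. The paper states only the first inequality and calls the second analogous; you write out both explicitly, but the method is identical.
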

 \begin{proof}
 We only \old{treat }\new{consider }the first case, since the other one is analogous. Let $k > 1$ be arbitrary, but fixed.
 Then \new{in line $5$ of \cref{alg:numfea} we have}
 \[\|\underbrace{U_1 \Theta^{(k)} V_1^T}_{A:=} - \underbrace{U_1 \Theta V_1^T}_{B:=}\|_F = \|\Theta^{(k)} - \Theta\|_F.\]
 $\rhb{A}$ has singular values $\gamma_+$, inherited from the last iteration and
 $\rhb{B}$ has the same singular values as $\rhb{B} \diag(V_1,\ldots,V_1) = \rhb{B V_1} = \rhb{H_2^{(k)}}$, which are given by $\gamma^{(k)}$. 
 It follows by Mirsky's inequality about singular values \cite{Mi1960_Sym} that $\|\gamma^{(k)}-\gamma_+\|_2 \leq \|A - B\|_F = \|\theta^{(k)}-\theta_+\|_2$.
 \end{proof}

 Convergence is hence not assured, but likely in the sense that the perturbation of matrices usually 
 leads to a fractional amount of perturbation of its singular values. To construct an entire tensor,
 the algorithm may be run in parallel for each single core.


\section{Conclusions and outlook}\label{sec:concl}\BC{kk1}{kk}
The simple equivalence between the tensor feasibility (TFP) and quantum marginal problem (QMP) allows for
an interesting interaction between the different perspectives on either side. Through the standard representation, 
the tensor train (TT-)feasibility problem can be decoupled into pairwise problems, by which, firstly, results from the QMP can be applied.
Thereby, the full H-description of the cone of squared TT-feasible values can be calculated in any specific instance.
At the same time, through our alternative consideration of orthogonality constraints on cores,
one can derive universal classes of necessary inequalities for the feasibility of pairs, whereas the concept of hives
yields a corresponding linear programming algorithm. Further on the practical side,
we have introduced simple ways to construct tensors with prescribed singular values in parallel,
based only on the sufficient construction of feasible pairs. 
Given that the concept of a standard representation
is transferable to any hierarchical format, implications for both the TFP and QMP are subject to future research.

\bibliographystyle{siamplain}
\footnotesize
\bibliography{singular_value_connections}

\begin{thebibliography}{10}

\bibitem{BeSj00_Coa}
{\sc A.~Berenstein and R.~Sjamaar}, {\em Coadjoint orbits, moment polytopes,
  and the {H}ilbert-{M}umford criterion}, Journal of the American Mathematical
  Society, 13 (2000), pp.~433--466,
  \url{https://doi.org/10.1090/S0894-0347-00-00327-1}.

\bibitem{Bh01_Lin}
{\sc R.~Bhatia}, {\em Linear algebra to quantum cohomology: The story of
  {A}lfred {H}orn's inequalities}, The American Mathematical Monthly, 108
  (2001), pp.~289--318, \url{https://doi.org/10.2307/2695237}.

\bibitem{CiMaLaZhZhCaPh15_Ten}
{\sc A.~{Cichocki}, D.~{Mandic}, L.~{De Lathauwer}, G.~{Zhou}, Q.~{Zhao},
  C.~{Caiafa}, and H.~A. {PHAN}}, {\em Tensor decompositions for signal
  processing applications: From two-way to multiway component analysis}, IEEE
  Signal Processing Magazine, 32 (2015), pp.~145--163,
  \url{https://doi.org/10.1109/MSP.2013.2297439}.

\bibitem{DaHa05_Qua}
{\sc S.~Daftuar and P.~Hayden}, {\em Quantum state transformations and the
  {S}chubert calculus}, Annals of Physics, 315 (2005), pp.~80 -- 122,
  \url{https://doi.org/10.1016/j.aop.2004.09.012}.
\newblock Special Issue.

\bibitem{LaMoVa00_AMu}
{\sc L.~{De Lathauwer}, B.~{De Moor}, and J.~Vandewalle}, {\em A multilinear
  singular value decomposition}, SIAM Journal on Matrix Analysis and
  Applications, 21 (2000), pp.~1253--1278,
  \url{https://doi.org/10.1137/S0895479896305696}.

\bibitem{DoStLa17_Ont}
{\sc I.~Domanov, A.~Stegeman, and L.~{De Lathauwer}}, {\em On the largest
  multilinear singular values of higher-order tensors}, SIAM Journal on Matrix
  Analysis and Applications, 38 (2017), pp.~1434--1453,
  \url{https://doi.org/10.1137/16M110770X}.

\bibitem{Fa1949_Ona}
{\sc K.~Fan}, {\em On a theorem of {W}eyl concerning eigenvalues of linear
  transformations. i}, Proceedings of the National Academy of Sciences of the
  United States of America, 35 (1949), pp.~652--655,
  \url{https://doi.org/10.1073/pnas.35.11.652}.

\bibitem{Fr02_Mom}
{\sc M.~Franz}, {\em Moment polytopes of projective {G}‑varieties and tensor
  products of symmetric group representations}, J. Lie Theory, 12 (2002),
  pp.~539--549.

\bibitem{Friedland00_Fin}
{\sc S.~Friedland}, {\em Finite and infinite dimensional generalizations of
  {K}lyachko's theorem}, Linear Algebra and its Applications, 319 (2000), pp.~3
  -- 22, \url{https://doi.org/10.1016/S0024-3795(00)00217-2}.

\bibitem{Fu00_Eig}
{\sc W.~Fulton}, {\em Eigenvalues, invariant factors, highest weights, and
  {S}chubert calculus}, Bull. Amer. Math. Soc. (N.S.), 37 (2000), pp.~209--249,
  \url{https://doi.org/10.1090/S0273-0979-00-00865-X}.

\bibitem{Fulton00_Eig}
{\sc W.~Fulton}, {\em Eigenvalues of majorized {H}ermitian matrices and
  {L}ittlewoodâ{R}ichardson coefficients}, Linear Algebra and its
  Applications, 319 (2000), pp.~23 -- 36,
  \url{https://doi.org/10.1016/S0024-3795(00)00218-4}.

\bibitem{Gr10_Hie}
{\sc L.~Grasedyck}, {\em Hierarchical singular value decomposition of tensors},
  SIAM Journal on Matrix Analysis and Applications, 31 (2010), pp.~2029--2054,
  \url{https://doi.org/10.1137/090764189}.

\bibitem{GrKrTo13_Ali}
{\sc L.~Grasedyck, D.~Kressner, and C.~Tobler}, {\em A literature survey of
  low-rank tensor approximation techniques}, GAMM-Mitteilungen, 36 (2013),
  pp.~53--78, \url{https://doi.org/10.1002/gamm.201310004}.

\bibitem{HaKrUs17_Per}
{\sc W.~Hackbusch, D.~Kressner, and A.~Uschmajew}, {\em Perturbation of
  higher-order singular values}, SIAM Journal on Applied Algebra and Geometry,
  1 (2017), pp.~374--387, \url{https://doi.org/10.1137/16M1089873}.

\bibitem{HaUs17_Ont}
{\sc W.~Hackbusch and A.~Uschmajew}, {\em On the interconnection between the
  higher-order singular values of real tensors}, Numerische Mathematik, 135
  (2017), pp.~875--894, \url{https://doi.org/10.1007/s00211-016-0819-9}.

\bibitem{HeRo95_Eig}
{\sc U.~Helmke and J.~Rosenthal}, {\em Eigenvalue inequalities and {S}chubert
  calculus}, Mathematische Nachrichten, 171 (1995), pp.~207--225,
  \url{https://doi.org/10.1002/mana.19951710113}.

\bibitem{Hi03_Ont}
{\sc A.~Higuchi}, {\em On the one-particle reduced density matrices of a pure
  three-qutrit quantum state},  (2003),
  \url{https://arxiv.org/abs/quant-ph/0309186v2}.

\bibitem{HiSuSz03_One}
{\sc A.~Higuchi, A.~Sudbery, and J.~Szulc}, {\em One-qubit reduced states of a
  pure many-qubit state: Polygon inequalities}, Phys. Rev. Lett., 90 (2003),
  p.~107902, \url{https://doi.org/10.1103/PhysRevLett.90.107902}.

\bibitem{Ho1962_Eig}
{\sc A.~Horn}, {\em Eigenvalues of sums of {H}ermitian matrices.}, Pacific J.
  Math., 12 (1962), pp.~225--241,
  \url{http://projecteuclid.org/euclid.pjm/1103036720}.

\bibitem{Kl98_Sta}
{\sc A.~A. Klyachko}, {\em Stable bundles, representation theory and
  {H}ermitian operators}, Selecta Math. (N.S.), 4 (1998), pp.~419--445,
  \url{https://doi.org/10.1007/s000290050037}.

\bibitem{Kl06_Qua}
{\sc A.~A. Klyachko}, {\em Quantum marginal problem and {N}-representability},
  Journal of Physics: Conference Series, 36 (2006), pp.~72--86,
  \url{https://doi.org/10.1088/1742-6596/36/1/014}.

\bibitem{KnTa99_The}
{\sc A.~Knutson and T.~Tao}, {\em The honeycomb model of {${\rm
  GL}_n({\mathbb{C}})$} tensor products. {I}. {P}roof of the saturation
  conjecture}, J. Amer. Math. Soc., 12 (1999), pp.~1055--1090,
  \url{https://doi.org/10.1090/S0894-0347-99-00299-4}.

\bibitem{KnTa01_Hon}
{\sc A.~Knutson and T.~Tao}, {\em Honeycombs and sums of {H}ermitian matrices},
  Notices Amer. Math. Soc., 48 (2001), pp.~175--186.

\bibitem{KnTaWo04_The}
{\sc A.~Knutson, T.~Tao, and C.~Woodward}, {\em The honeycomb model of {${\rm
  GL}_n({\mathbb{C}})$} tensor products. {II}. {P}uzzles determine facets of
  the {L}ittlewood-{R}ichardson cone}, J. Amer. Math. Soc., 17 (2004),
  pp.~19--48, \url{https://doi.org/10.1090/S0894-0347-03-00441-7}.

\bibitem{KoBa09_Ten}
{\sc T.~G. Kolda and B.~W. Bader}, {\em Tensor decompositions and
  applications}, SIAM Review, 51 (2009), pp.~455--500,
  \url{https://doi.org/10.1137/07070111X}.

\bibitem{LiPoWa14_Ran}
{\sc C.-K. Li, Y.-T. Poon, and X.~Wang}, {\em Ranks and eigenvalues of states
  with prescribed reduced states}, Electronic Journal of Linear Algebra, 27
  (2014), \url{https://doi.org/10.13001/1081-3810.2882}.

\bibitem{Mi1960_Sym}
{\sc L.~Mirsky}, {\em Symmetric gauge functions and unitarily invariant norms},
  Quart. J. Math. Oxford Ser. (2), 11 (1960), pp.~50--59,
  \url{https://doi.org/10.1093/qmath/11.1.50}.

\bibitem{Os11_Ten}
{\sc I.~V. Oseledets}, {\em Tensor-train decomposition}, SIAM Journal on
  Scientific Computing, 33 (2011), pp.~2295--2317,
  \url{https://doi.org/10.1137/090752286}.

\bibitem{Sc14_Qua}
{\sc C.~Schilling}, {\em Quantum marginal problem and its physical relevance},
  PhD thesis, ETH Zurich, 2014, \url{https://doi.org/10.3929/ethz-a-010139282}.
\newblock Diss., Eidgenössische Technische Hochschule ETH Zürich, Nr. 21748,
  2014.

\bibitem{Seigal18_Gra}
{\sc A.~Seigal}, {\em Gram determinants of real binary tensors}, Linear Algebra
  and its Applications, 544 (2018), pp.~350--369,
  \url{https://doi.org/10.1016/j.laa.2018.01.019}.

\bibitem{SiLaFuHuPaFa17_Ten}
{\sc N.~D. {Sidiropoulos}, L.~{De Lathauwer}, X.~{Fu}, K.~{Huang}, E.~E.
  {Papalexakis}, and C.~{Faloutsos}}, {\em Tensor decomposition for signal
  processing and machine learning}, IEEE Transactions on Signal Processing, 65
  (2017), pp.~3551--3582, \url{https://doi.org/10.1109/TSP.2017.2690524}.

\bibitem{Tucker66_Som}
{\sc L.~R. Tucker}, {\em Some mathematical notes on three-mode factor
  analysis}, Psychometrika, 31 (1966), pp.~279--311,
  \url{https://doi.org/10.1007/BF02289464}.

\bibitem{Vi03_Eff}
{\sc G.~Vidal}, {\em Efficient classical simulation of slightly entangled
  quantum computations}, Phys. Rev. Lett., 91 (2003), p.~147902,
  \url{https://doi.org/10.1103/PhysRevLett.91.147902}.

\bibitem{We1912_Das}
{\sc H.~Weyl}, {\em Das asymptotische {V}erteilungsgesetz der {E}igenwerte
  linearer partieller {D}ifferentialgleichungen (mit einer {A}nwendung auf die
  {T}heorie der {H}ohlraumstrahlung)}, Mathematische Annalen, 71 (1912),
  pp.~441--479, \url{https://doi.org/10.1007/BF01456804}.

\end{thebibliography}
\normalsize


%
%
\end{document}
